\numberwithin{equation}{section}
\newcommand{\updated}[1]{#1}
\theoremstyle{plain}
\newtheorem{theorem}{Theorem}[section]
\newtheorem{lemma}[theorem]{Lemma}
\newtheorem{proposition}[theorem]{Proposition}
\theoremstyle{definition}
\newtheorem{example}[theorem]{Example}
\newtheorem{remark}[theorem]{Remark}
\newcommand{\Erdos}{Erd\H{o}s\xspace}
\newcommand{\Karonski}{Karo\'{n}ski\xspace}
\newcommand{\Luczak}{{\L}uczak\xspace} 
\newcommand{\Renyi}{R{\'e}nyi\xspace}
\newcommand{\abs}[1]{{\lvert#1\rvert}}
\newcommand{\floor}[1]{\left\lfloor #1 \right\rfloor}
\newcommand{\lesim}{\lesssim}
\newcommand{\weq}{\ = \ }
\newcommand{\wle}{\ \le \ }
\newcommand{\wge}{\ \ge \ }
\newcommand{\wlesim}{\ \lesssim \ }
\newcommand{\wasymp}{\ \asymp \ }
\newcommand{\E}{\mathbb{E}}
\newcommand{\pr}{\mathbb{P}}
\newcommand{\cond}{\, | \,}
\newcommand{\Var}{\operatorname{Var}}
\newcommand{\Cov}{\operatorname{Cov}}
\newcommand{\cA}{\mathcal{A}}
\newcommand{\cD}{\mathcal{D}}
\newcommand{\cH}{\mathcal{H}}
\newcommand{\cI}{\mathcal{I}}
\newcommand{\tx}{\tilde x}
\newcommand{\wto}{\ \to \ }
\begin{document}

\title{Connectivity of random hypergraphs with a given hyperedge size distribution}
\author{Elmer Bergman and Lasse Leskelä \\[1ex]
\small Aalto University \\
\small Department of Mathematics and Systems Analysis \\
\small Otakaari 1, 02015 Espoo, Finland
}
\date{\today}

\maketitle

\begin{abstract}
This article discusses random hypergraphs with varying hyperedge sizes, admitting large hyperedges with size tending to infinity, and heavy-tailed limiting hyperedge size distributions. The main result describes a threshold for the random hypergraph to be connected with high probability, and shows that the average hyperedge size suffices to characterise connectivity under mild regularity assumptions. Especially, the connectivity threshold is in most cases insensitive to the shape and higher moments of the hyperedge size distribution. Similar results are also provided for related random intersection graph models.
\end{abstract}

{\bf Keywords}: random hypergraph, random intersection graph, connectivity threshold

\section{Introduction}

The probability of connectivity is one of the most classical problems in the study of random graphs. 
In their seminal article, \Erdos and \Renyi \cite{Erdos_Renyi_1959} proved that
a random graph $G_{nm}$ with $n$ nodes and $m=m_n$ edges, is with high probability (whp) connected when 
$2 \frac{m}{n} - \log n \to \infty$, and whp disconnected when $2 \frac{m}{n} - \log n \to -\infty$.
The random graph $G_{nm}$ is a special instance of a random hypergraph $H_{nmd}$ with $n$ nodes and $m$ hyperedges of size $d$, with $d=2$.  An extension of the above result to random hypergraphs with $d = O(1)$, noted in \cite{Coja-Oghlan_Moore_Sanwalani_2007} and formally proved by Poole \cite{Poole_2015}, states that $H_{nmd}$ is whp connected when
$d \frac{m}{n} - \log n \to \infty$ and whp disconnected when $d \frac{m}{n} - \log n \to -\infty$.
Instead of full connectivity, most earlier research on random hypergraphs has focused on sparse regimes critical for the emergence of a giant connected component (Behrisch, Coja-Oghlan, and Kang \cite{Behrisch_Coja-Oghlan_Kang_2010,Behrisch_Coja-Oghlan_Kang_2014}; \Karonski and \Luczak \cite{Karonski_Luczak_2002}; Schmidt-Pruzan and Shamir \cite{Schmidt-Pruzan_Shamir_1985}).  
On the other hand, fundamental statistical questions related to identifying parameters and detecting communities in networks \updated{with} multiple layers and higher-order interactions often require a quantitative understanding of the full connectivity threshold \cite{Ahn_Lee_Suh_2019,Chien_Lin_Wang_2019,Chodrow_Veldt_Benson_2021,Kaminski_Pralat_Theberge_2021}.
This motivates to study random hypergraphs where the hyperedge sizes may not be universally bounded, and the hyperedge sizes may be highly variable. Our main research objective is to study the effect of inhomogeneous hyperedge sizes on critical thresholds of connectivity in large hypergraphs.

\subsection{Notations}

\paragraph{Hypergraphs.}

A \emph{hypergraph} is a pair $h = (V_h, E_h)$ where $V_h$ is a finite set of objects called nodes and $E_h$ is a collection of nonempty subsets\footnote{\updated{In this article we restrict to hypergraphs not containing degenerate or parallel hyperedges. In contrast to \cite{Chodrow_2020}, we may therefore represent each hyperedge and the collection of hyperedges as sets instead of multisets.}}
 of $V_h$ called hyperedges.  A hypergraph $h$ is \emph{connected} if for each partition of $V_h$ into nonempty sets $V_1$ and $V_2$ there exists a hyperedge $e \in E_h$ such that $V_1 \cap e \ne \emptyset$ and $V_2 \cap e \ne \emptyset$.  A node of a hypergraph is \emph{isolated} if it is not contained in any hyperedge of size at least two. The \emph{empirical hyperedge size distribution} of a hypergraph $h$ is the probability mass function $f(x) = \abs{E_h}^{-1} \sum_{e \in E_h} \delta_{\abs{e}}(x)$ where $\delta_a(x)=1$ if $x=a$ and $\delta_a(x)=0$ otherwise. The \emph{2-section} of a hypergraph $h$ is the graph $h' = (V'_h, E'_n)$ where $V'_h = V_h$ and $E'_h$ is the set of unordered node pairs contained in at least one hyperedge of~$h$.

\paragraph{Asymptotics.}
Standard Bachmann--Landau notations are employed as follows: We write
$a_n \ll b_n$ and $a_n = o(b_n)$ when $a_n/\abs{b_n} \to 0$, $a_n \lesim b_n$ and $a_n = O(b_n)$ when $\limsup a_n/\abs{b_n} < \infty$, and $a_n \sim b_n$ when $a_n/b_n \to 1$.

\paragraph{Probability.}
The expectation of a random variable defined on a probability space equipped with probability measure $\pr = \pr_n$ is denoted by $\E = \E_n$.  For clarity, the scale parameter $n=1,2,\dots$ is mostly omitted from the notations. An event $\cA = \cA_n$ is said to occur \emph{with high probability (whp)} if $\pr_n(\cA_n) \to 1$ as $n \to \infty$.

\subsection{Outline}
The rest of the article is organised as follows.  Section~\ref{sec:Main} presents the main results and 
Section~\ref{sec:Discussion} discusses their relevance and relation to earlier research.
The proofs are in Sections \ref{sec:Isolated}--\ref{sec:Proofs}, with Section \ref{sec:Isolated} analysing isolated nodes, Section~\ref{sec:Connectivity} connectivity, and Section \ref{sec:Proofs} summarising details.

\section{Main results}
\label{sec:Main}

The main results are given separately for three different, but closely related network models. 
Section~\ref{sec:Hypergraph} discusses random hypergraphs with given hyperedge sizes, 
and Section~\ref{sec:RIG} random intersection graphs obtained as a 2-section of a random hypergraph.
Section~\ref{sec:SRH} describes a random hypergraph model which allows to sample instances of both earlier models.

\subsection{Random hypergraphs with given hyperedge sizes}
\label{sec:Hypergraph}

Fix integers $n,m \ge 1$ and a probability distribution $f$ on $\{1,\dots,n\}$.  
\updated{
Denote by $\cH_{nmf}$ the collection of hypergraphs on node set $\{1,\dots,n\}$ having
$m$ hyperedges and empirical hyperedge size distribution $f$.
We see that the set $\cH_{nmf}$ is nonempty when for all $x=1,\dots,n$, $mf(x)$ is an integer satisfying $m f(x) \le \binom{n}{x}$. In this case we let $H_{nmf}$ be a random hypergraph sampled uniformly at random from $\cH_{nmf}$.
Then $H_{nmf}$ contains $mf(1)$ hyperedges of size 1, $mf(2)$ hyperedges of size 2, and so on.  
}
Because hyperedges of size one do not affect connectivity, relevant moments of the hyperedge size distribution are denoted by
\begin{equation}
 \label{eq:fMoment}
 (f)_{r} \weq \sum_{x=2}^n x^r f(x).
\end{equation}

\begin{theorem}
\label{the:Hypergraph}
If $m=m_n$ and $f = f_n$ are such that
$\sum_{x=1}^n \binom{n}{x}^{-1} f(x)^2 \ll m^{-2}$ and
$(f)_2 \lesim (f)_0$, then
\[
 \pr(\text{$H_{nmf}$ is connected})
 \wto
 \begin{cases}
  0 &\quad \text{if $\log n - \frac{m}{n} (f)_{1} \to +\infty$}, \\
  1 &\quad \text{if $\log n - \frac{m}{n} (f)_{1} \to -\infty$}.
 \end{cases}
\]
\end{theorem}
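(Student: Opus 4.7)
The plan is to follow the classical two-pronged strategy for connectivity thresholds: locate the threshold through the expected number of isolated nodes, use the second moment method to force isolated nodes (and thus disconnection) in the subcritical regime, and in the supercritical regime combine the absence of isolated nodes with a union bound that rules out intermediate-sized components. The first step is to compute, for a fixed node $v$, the probability that $v$ is contained in no hyperedge of size $\ge 2$. Because hyperedges of distinct sizes are independent uniform subsets of the corresponding $\binom{n}{x}$-family,
\[
 \pr(v \text{ isolated}) \weq \prod_{x=2}^n \frac{\binom{\binom{n-1}{x}}{mf(x)}}{\binom{\binom{n}{x}}{mf(x)}}.
\]
A Taylor expansion of each factor, using $\binom{n-1}{x}/\binom{n}{x} = 1-x/n$ and the hypothesis $\sum_x \binom{n}{x}^{-1} f(x)^2 \ll m^{-2}$ to absorb the deviation from $(1-x/n)^{mf(x)}$, yields $\pr(v \text{ isolated}) = e^{-\frac{m}{n}(f)_1}(1+o(1))$. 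Hence the number $X_0$ of isolated nodes satisfies $\E X_0 \sim n \exp(-\tfrac{m}{n}(f)_1)$, and the threshold announced in the theorem is precisely where $\E X_0$ transitions between $\to\infty$ and $\to 0$.

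For the subcritical direction ($\log n - \tfrac{m}{n}(f)_1 \to +\infty$), we have $\E X_0 \to \infty$, and Chebyshev suffices to give $X_0 \ge 1$ whp, forcing disconnection. The required second-moment bound uses the analogous formula
\[
 \pr(v,w \text{ both isolated}) \weq \prod_{x=2}^n \frac{\binom{\binom{n-2}{x}}{mf(x)}}{\binom{\binom{n}{x}}{mf(x)}},
\]
which after the same Taylor expansion is $(1+o(1))\exp(-2\tfrac{m}{n}(f)_1)$, giving $\var(X_0) = o((\E X_0)^2)$. For the supercritical direction ($\log n - \tfrac{m}{n}(f)_1 \to -\infty$), first Markov gives $X_0 = 0$ whp. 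It remains to exclude a disconnecting set $S \subset \{1,\dots,n\}$ of size $2 \le k \le n/2$, i.e., one for which every hyperedge lies entirely in $S$ or entirely in $S^c$. With $p_x(k) := [\binom{k}{x} + \binom{n-k}{x}]/\binom{n}{x}$ being the probability that a uniform $x$-subset has this property, a union bound yields
\[
 \pr(\text{some disconnecting $k$-set exists}) \wle \binom{n}{k} \prod_{x=2}^n p_x(k)^{mf(x)},
\]
where independence across sizes and a standard hypergeometric-to-binomial sampling argument handle the within-size dependence. A convexity estimate $-\log p_x(k) \gesim \frac{x k (n-k)}{n^2}$ combined with $\binom{n}{k} \le (en/k)^k$ yields a summand of order $\exp(k \log(en/k) - c \tfrac{k(n-k)}{n} \tfrac{m}{n} (f)_1)$, which sums to $o(1)$ over $k=2,\dots,\lfloor n/2 \rfloor$ under the supercritical hypothesis.

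The main obstacle is the uniform control of $p_x(k)$ when $x$ is comparable to $k$ or $n$, where the simple approximation $p_x(k) \approx (k/n)^x + (1-k/n)^x$ fails or is too weak. This is exactly the place where the moment regularity $(f)_2 \lesim (f)_0$ enters: it forces the hyperedge size distribution to concentrate on moderate $x$ in the sense that large-$x$ hyperedges cannot dominate $(f)_1$, which in turn keeps the exponent $\sum_x mf(x) \log p_x(k)$ comparable to its naive small-$x$ approximation uniformly in $k$. Marshalling this estimate carefully across the full range of $k$, while remaining consistent with the Taylor expansions used for isolated nodes, is the technical heart of the argument; once done, the subcritical and supercritical claims follow by combining the steps above.
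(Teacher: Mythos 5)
Your proposal takes a genuinely different route from the paper: you analyse $H_{nmf}$ directly as a product (over sizes $x$) of uniform random $mf(x)$-subsets of the $\binom{n}{x}$-family, handling the within-size dependence by exact hypergeometric formulas and a hypergeometric-to-binomial comparison. The paper instead couples $H_{nmf}$ to the \emph{shotgun} model $H^*_{nmF}$ (fully independent sets $V_1,\dots,V_m$), shows via Lemma~\ref{the:Distinct} that under $\sum_x \binom{n}{x}^{-1} f(x)^2 \ll m^{-2}$ the sets are distinct whp, and that conditionally on distinctness $H^*_{nmF}$ has the uniform law on $\cH_{nmf}$; this reduces Theorem~\ref{the:Hypergraph} to Theorem~\ref{the:SRH} for the independent model. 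Your identity $\pr(v\text{ isolated}) = \prod_x \binom{\binom{n-1}{x}}{mf(x)}/\binom{\binom{n}{x}}{mf(x)}$ and the union bound $\binom{n}{k}\prod_x p_x(k)^{mf(x)}$ are both correct (the latter because $\binom{a}{m}/\binom{N}{m} \le (a/N)^m$ for sampling without replacement), so the skeleton is viable; what the paper's decoupling buys is that isolation and cut-probability estimates only need to be done once, in a model where the hyperedges are independent.

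There are, however, two genuine gaps. First, and most seriously, you dismiss the subcritical direction as ``Chebyshev suffices'' after a Taylor expansion, without identifying that this is precisely where $(f)_2 \lesim (f)_0$ is indispensable. The paper's Example~\ref{exa:SmallAndFull} exhibits a hyperedge-size distribution with $\mu = \log n - \tfrac{m}{n}(f)_1 \to +\infty$ (and indeed $\E X_0 \ge n^{1/6-o(1)}$) for which the hypergraph is nevertheless whp connected; so $\Var(X_0) = o((\E X_0)^2)$ is \emph{false} in general, and a bare two-term Taylor expansion of $\pr(v,w)/\pr(v)^2$ cannot deliver it. The actual control is the content of the paper's Lemma~\ref{the:PairIsolationRatioRIG} together with Proposition~\ref{the:YesIsolatedRIG}, which bound $\log\bigl(\pr(v,w)/\pr(v)^2\bigr)$ by $\sum_k \Var(\phi_1(X_k))/(\E\phi_1(X_k))^2$ and then use $(F)_2 \lesim (F)_0$ to make that sum $o(1)$. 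Second, your ``convexity estimate'' $-\log p_x(k) \gesim \tfrac{xk(n-k)}{n^2}$ is stated without proof, and your claim that $(f)_2 \lesim (f)_0$ is needed to rescue it when $x$ is comparable to $n$ is internally inconsistent: if the pointwise inequality held uniformly in $(x,k)$ with an absolute constant, no moment condition on $f$ would be required at all in the supercritical union bound. The paper avoids this by proving only the weaker bounds \eqref{eq:ManyIsolatedBoundNew1}--\eqref{eq:ManyIsolatedBoundNew2} and splitting the range of $k$ at $n_0 = (F)_0^{-1} m^{-1} n^2$; the regularity $(F)_2 \lesim (F)_0$ enters there to control the second-order error terms $\tfrac{mr^2}{n^2}(F)_2$, not to salvage a pointwise estimate on $p_x(k)$. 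Your proof as written thus misattributes the role of the key hypothesis and leaves both the second-moment bound and the cut bound as asserted but unverified.
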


Let $H_{nmd}$ be a random hypergraph sampled uniformly at random from the set of hypergraphs on node set $\{1,\dots,n\}$ having $m$ hyperedges of size $d$. Such a hypergraph is a special instance of $H_{nmf}$ where $f$ is the Dirac point mass at $d$, and therefore Theorem~\ref{the:Hypergraph} characterises the connectivity of $H_{nmd}$ when $m \ll \binom{n}{d}^{1/2}$ and $d = O(1)$. In this special case where all hyperedges are of equal size $d$, Theorem~\ref{the:HypergraphRegular} below provides an alternative characterisation of connectivity which admits large hyperedges not constrained by the assumption that $d = O(1)$.

\begin{theorem}
\label{the:HypergraphRegular}
If $m=m_n$ and $d=d_n$ are such that 
$m \ll \binom{n}{d}^{1/2}$ and $2 \le d \le n$, then
\[
 \pr(\text{$H_{nmd}$ is connected})
 \wto
 \begin{cases}
  0 &\quad \text{if $\log n + m \log ( 1 - d/n ) \to +\infty$}, \\
  1 &\quad \text{if $\log n + m \log ( 1 - d/n ) \to -\infty$}.
 \end{cases}
\]
\end{theorem}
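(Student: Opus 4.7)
The plan is to handle both directions through the standard machinery: the second moment method on isolated nodes for the disconnectedness direction, and a union bound over cuts for the connectedness direction. The hypothesis $m \ll \binom{n}{d}^{1/2}$ enters both arguments through the basic binomial-ratio approximation
\[
 \frac{\binom{A}{m}}{\binom{B}{m}}
 \weq \left(\frac{A}{B}\right)^m (1+o(1))
 \qquad \text{whenever $m^2 \ll A \le B$,}
\]
obtained by expanding $\log\prod_{i=0}^{m-1}(A-i)/(B-i)$ and bounding the quadratic error by $O(m^2/A)$.

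For the direction $\log n + m\log(1-d/n) \to +\infty$, let $I$ denote the number of isolated nodes. Since $\pr(v \text{ is isolated}) = \binom{\binom{n-1}{d}}{m}/\binom{\binom{n}{d}}{m}$ and $\binom{n-1}{d}/\binom{n}{d} = 1 - d/n$, the ratio estimate gives $\E[I] \sim n(1-d/n)^m = \exp(\log n + m\log(1-d/n)) \to \infty$. An analogous computation with $\binom{n-2}{d}$ in the numerator yields $\E[I(I-1)]/\E[I]^2 \to 1$, so Chebyshev's inequality forces $I \ge 1$ whp, hence $H_{nmd}$ is whp disconnected.

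For the direction $\log n + m\log(1-d/n) \to -\infty$, I would union-bound over partitions $(S, [n]\setminus S)$ with $|S| = k$. The probability that no hyperedge crosses such a cut equals $\binom{\binom{k}{d}+\binom{n-k}{d}}{m}/\binom{\binom{n}{d}}{m} \le p_k^m$, where $p_k = (\binom{k}{d}+\binom{n-k}{d})/\binom{n}{d}$ is the non-crossing probability for a single uniform $d$-subset. Together with the elementary estimates $\binom{k}{d}/\binom{n}{d} \le (k/n)^d$ and its symmetric counterpart,
\[
 \pr(H_{nmd}\text{ disconnected})
 \wle \sum_{k=1}^{\lfloor n/2 \rfloor} \binom{n}{k} \bigl((k/n)^d + (1-k/n)^d\bigr)^m.
\]

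The hard part is proving this sum is $o(1)$. My plan is to split at some $k^*$. For $k \le k^*$, the term $(k/n)^d$ is negligible against $(1-k/n)^d \le e^{-dk/n}$, and combined with $\binom{n}{k} \le n^k/k!$ the contribution is at most $\sum_{k \ge 1} \lambda^k/k! \lesim \lambda$, with $\lambda = ne^{-dm/n} \asymp n(1-d/n)^m = o(1)$ by hypothesis. For $k > k^*$, the entropy bound $\binom{n}{k} \le e^{nH(k/n)}$ together with the symmetry $p_k = p_{n-k}$ confines attention to $\alpha = k/n \in (k^*/n, 1/2]$, where one must verify that $nH(\alpha) + m\log((1-\alpha)^d + \alpha^d) \to -\infty$ uniformly. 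This uniform control is the main obstacle, and I expect it will require a case analysis depending on the size of $d$ relative to $\log n$ and to $n$; the inequality $(1-k/n)^d \le (1-d/n)^k$ (valid for $k \ge d$) and its complementary variant should allow one to reduce the estimate back to the critical quantity $\log n + m\log(1-d/n)$.
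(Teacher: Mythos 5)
The paper's route is different: instead of analysing $H_{nmd}$ directly, it reduces to the iid ``shotgun'' hypergraph $H^*_{nmd}$ in which $V_1,\dots,V_m$ are sampled independently and uniformly among $d$-subsets of $[n]$, conditions on the event $\cD$ that all $V_k$ are distinct, and invokes the birthday-problem bound of Lemma~\ref{the:Distinct} to show $\pr(\cD)\to 1$ exactly when $m \ll \binom{n}{d}^{1/2}$. On $\cD$, the shotgun model has the same law as $H_{nmd}$, so $\pr(H_{nmd}\text{ connected}) = \pr(H^*_{nmd}\text{ connected}) + o(1)$, and in the iid model isolation and cut probabilities are exact $m$-fold products; no binomial-ratio approximation is needed. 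Your second-moment step also has a hidden gap: your ratio estimate needs $m^2 \ll \binom{n-2}{d} = \binom{n}{d}(1-d/n)(1-\tfrac{d}{n-1})$, not merely $m^2 \ll \binom{n}{d}$, and since the extra factor can be small you would have to argue this from $\lambda\to+\infty$.

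The fatal flaw is in your small-$k$ estimate for the connectedness direction. You set $\lambda = n e^{-dm/n}$ and claim $\lambda \asymp n(1-d/n)^m = o(1)$. But $\log(1-t) < -t$, so $n e^{-dm/n} \ge n(1-d/n)^m$ with an exponentially large gap whenever $m(d/n)^2 \to\infty$. For example $d = n/2$, $m = 2\log n$: then $\log n + m\log(1-d/n) = (1-2\log 2)\log n \to -\infty$ and $m \ll \binom{n}{d}^{1/2}$, so the theorem asserts connectivity, yet $n e^{-dm/n} = 1$, not $o(1)$. Replacing $m\log(1-d/n)$ by $-md/n$ is only harmless when $d \lesssim n/\log n$ (Remark~\ref{rem:LargeHyperedges}), i.e.\ precisely the regime already covered by Poole; the content of this theorem is $d \gg n/\log n$. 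The paper avoids this by using Lemma~\ref{the:Shotgun}, $\binom{n-r}{d}/\binom{n}{d} = \binom{n-d}{r}/\binom{n}{r} \le (1-d/n)^r$, which places exponent $r$ on $(1-d/n)$; combined with $\binom{r}{d}/\binom{n-r}{d} \le (\tfrac{r}{n-r})^d$ this gives $q_r(d) \le (1 + (\tfrac{r}{n-r})^d)(1-d/n)^r$ (Lemma~\ref{the:CutBoundDet}), so $q_r(d)^m$ contains $[(1-d/n)^m]^r = (e^\lambda/n)^r$ and the threshold appears term-by-term; after $\binom{n}{r}\le(en/r)^r$ each summand is essentially $e^{(\lambda+O(1))r}$, which is summable and $o(1)$ when $\lambda\to-\infty$. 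Your bound $((k/n)^d + (1-k/n)^d)^m$ has the exponents in the unfavourable position: the inequality $(1-k/n)^d \le (1-d/n)^k$ only swaps one of the two terms, and only for $k \ge d$, leaving $(k/n)^d$ and the full range of $k$ untreated — the paper handles large cuts separately via $q_r(d) \le 1 - 2\tfrac{r}{n}(1-\tfrac{r}{n})$ with a split at $n_0 = (m^{-1}n^2)\wedge(n/2)$.
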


\subsection{Random intersection graphs}
\label{sec:RIG}

Fix integers $n,m \ge 1$ and a probability distribution $f$ on $\{0,\dots,n\}$. Let $G_{nmf}$ be a random graph on node set $\{1,\dots,n\}$ generated by first sampling mutually independent random sets $V_1,\dots,V_m \subset \{1,\dots,n\}$ from probability mass function $A \mapsto f(\abs{A}) \binom{n}{\abs{A}}^{-1}$, and then declaring each unordered node pair $\{i,j\}$ adjacent if there exist at least one set $V_k$ containing both $i$ and $j$. This model is known as the passive random intersection graph \cite{Godehardt_Jaworski_2001}.

\begin{theorem}
\label{the:RIG}
For any $m=m_n$ and $f = f_n$ such that $m \ge 1$ and $(f)_2 \lesim (f)_0$,
\[
 \pr(\text{$G_{nmf}$ is connected})
 \wto
 \begin{cases}
  0 &\quad \text{if $\log n - \frac{m}{n} (f)_{1} \to +\infty$}, \\
  1 &\quad \text{if $\log n - \frac{m}{n} (f)_{1} \to -\infty$}.
 \end{cases}
\]
\end{theorem}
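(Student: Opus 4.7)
Our strategy parallels the proofs of Theorems~\ref{the:Hypergraph} and \ref{the:HypergraphRegular}: the second moment method applied to isolated nodes drives disconnection below the threshold, while a union bound over cuts yields connectivity above it. Throughout set $p := (f)_1/n$ and $\lambda := mp$. The moment assumption $(f)_2 \lesim (f)_0$, together with the Cauchy--Schwarz inequality $(f)_1^2 \le (f)_0 (f)_2$ and $(f)_1 \ge 2(f)_0$ (since the sums defining $(f)_r$ start at $x \ge 2$), shows that $(f)_0, (f)_1, (f)_2$ are mutually comparable and bounded, so $p = o(1)$.

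\textbf{Subcritical direction ($\log n - \lambda \to +\infty$).} Let $Z$ be the number of isolated nodes in $G_{nmf}$, namely those belonging to no $V_k$ with $|V_k| \ge 2$. By independence of $V_1,\dots,V_m$ and $\pr(i \in V_k,\ |V_k| \ge 2) = p$, we have $\E Z = n(1-p)^m$. Since $mp^2 \lesim \lambda/n \lesim \log n/n = o(1)$ in this regime, Taylor expansion yields $\log \E Z = \log n - \lambda + o(1) \to +\infty$. For the second moment, inclusion--exclusion per $V_k$ gives $\pr(i,j \text{ both isolated}) = (1-p_2)^m$ with $p_2 := 2p - ((f)_2 - (f)_1)/(n(n-1))$, and the log-ratio $\log[(1-p_2)^m/(1-p)^{2m}] = O(m(f)_2/n^2 + mp^2) = O(\log n/n) \to 0$ by the moment bound. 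Hence $\E Z^2/(\E Z)^2 \to 1$, and Paley--Zygmund gives $\pr(Z \ge 1) \to 1$, so $G_{nmf}$ is disconnected whp.

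\textbf{Supercritical direction ($\log n - \lambda \to -\infty$; set $\omega := \lambda - \log n \to +\infty$).} The union bound over cuts gives
\[
 \pr(G_{nmf} \text{ disc}) \wle \sum_{s=1}^{\lfloor n/2 \rfloor} \binom{n}{s} q(s)^m, \qquad q(s) \weq \sum_x f(x) \frac{\binom{s}{x}+\binom{n-s}{x}}{\binom{n}{x}}.
\]
Using $\binom{n-s}{x}/\binom{n}{x} \le (1-s/n)^x$, the elementary inequality $(1-a)^x \le 1 - ax + \binom{x}{2}a^2$ (valid for $a \in [0,1]$, $x \ge 1$, since $g(a) := 1 - ax + \binom{x}{2}a^2 - (1-a)^x$ satisfies $g(0) = g'(0) = 0$ and $g'' \ge 0$), and $\binom{s}{x}/\binom{n}{x} \le (s/n)^x \le (s/n)^2$ for $x \ge 2$, one obtains $q(s) \le 1 - (s/n)(f)_1 + C(s/n)^2(f)_1$ for some constant $C$ depending only on the moment constant. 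Split the sum at $s = \varepsilon_n n$ with $\varepsilon_n \to 0$ chosen so that $\varepsilon_n \log n \to \infty$ and $\varepsilon_n \log n = o(\omega)$ (e.g.\ $\varepsilon_n = \min(\sqrt{\omega}/\log n,\ 1/(2C))$). In the small range $1 \le s \le \varepsilon_n n$, the bound yields $q(s)^m \le \exp(-\lambda s(1 - C\varepsilon_n))$, which combined with $\binom{n}{s} \le (en/s)^s$ gives $\binom{n}{s}q(s)^m \le \exp(s[1 - \log s - \omega(1-o(1))])$, summing over $s \ge 1$ to $O(e^{-\omega}) \to 0$. In the large range $s > \varepsilon_n n$, the $x=2$ contribution alone gives $q(s) \le 1 - (f)_0 s/n$ for $s \le n/2$, hence $q(s)^m \le \exp(-(f)_0 ms/n)$; using $(f)_0 \asymp (f)_1$ to bound $(f)_0 m/n \gesim \log n$ and $\binom{n}{s} \le 2^n$, each term is $\le \exp(n\log 2 - c\varepsilon_n n\log n) \le e^{-n}$ eventually, so summing over at most $n/2$ values of $s$ yields $o(1)$.

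\textbf{Main obstacle.} The principal difficulty is extracting the \emph{sharp} leading coefficient $(f)_1$ in the Taylor bound on $q(s)$, so that the resulting exponent matches the threshold $\lambda = \log n$ exactly even when $\omega$ grows arbitrarily slowly. This forces the two-regime split with $\varepsilon_n \to 0$ absorbing the quadratic correction, and relies on $(f)_0 \asymp (f)_1$ granted by the moment assumption to make the crude $x=2$ bound sufficient in the large regime.
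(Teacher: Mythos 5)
Your proof is correct, and it uses the same two ingredients as the paper (a second-moment argument on isolated nodes for the subcritical side, a union bound over vertex cuts for the supercritical side), but the packaging differs. The paper does not prove Theorem~\ref{the:RIG} directly; it derives it from the shotgun-hypergraph Theorem~\ref{the:SRH} with $F = f \times \cdots \times f$, using that a hypergraph and its 2-section are connected simultaneously. Theorem~\ref{the:SRH} in turn rests on Propositions~\ref{the:YesIsolatedRIG} and~\ref{the:GeneralPassiveRIGConnectedNewGen} together with the cut bounds of Lemma~\ref{the:CutBoundDet}, and your argument essentially specializes these to the i.i.d.\ case. One noteworthy structural difference is the splitting point in the supercritical union bound: the paper splits at the $\omega$-free level $n_0 = (F)_0^{-1} m^{-1} n^2$, at which the quadratic correction to the exponent is $\tfrac{3}{2}(F)_2/(F)_0 = O(1)$ and is simply absorbed into $\mu + O(1) \to -\infty$, whereas you use an $\omega$-dependent split at $\varepsilon_n n$. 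That works, but two small repairs are needed: (i) your suggested $\varepsilon_n = \min(\sqrt{\omega}/\log n,\, 1/(2C))$ is not $o(1)$ when $\omega \gesim \log^2 n$, so in that regime the exponent should read $1 - \log s - c\,\omega$ for a constant $c \in (0,1)$ rather than $1-\log s - \omega(1-o(1))$; the geometric sum still vanishes, so the conclusion is unchanged. (ii) Your expression $q(s) = \sum_x f(x)\bigl(\binom{s}{x}+\binom{n-s}{x}\bigr)/\binom{n}{x}$ double-counts the $x=0$ term; this only inflates $q(s)$ and so the union bound remains valid, but the exact value is $q(s) = f(0)+f(1)+\sum_{x\ge 2}\cdots$, as in the paper's $q_r$.
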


Let $G_{nmd}$ be a special instance of the above random intersection graph model in which $f=\delta_d$ is the Dirac point mass at $d$.
\begin{theorem}
\label{the:RIGRegular}
For any $m=m_n$ and $d = d_n$ such that $m \ge 1$ and $2 \le d \le n$,
\[
 \pr(\text{$G_{nmd}$ is connected})
 \wto
 \begin{cases}
  0 &\quad \text{if $\log n + m \log ( 1 - d/n ) \to +\infty$}, \\
  1 &\quad \text{if $\log n + m \log ( 1 - d/n ) \to -\infty$}.
 \end{cases}
\]
\end{theorem}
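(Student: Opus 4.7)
The plan is a first- and second-moment analysis of the number of isolated nodes, combined with a union bound over vertex cuts. Because every $V_k$ has size $d \geq 2$, a node $i$ is isolated in $G_{nmd}$ iff $i \notin V_j$ for every $j$, and since $V_1, \ldots, V_m$ are iid uniform $d$-subsets, the number $X$ of isolated nodes satisfies
\[
 \E X \weq n(1-d/n)^m \weq \exp\bigl(\log n + m\log(1-d/n)\bigr),
\]
so the two regimes of the theorem correspond exactly to $\E X \to \infty$ and $\E X \to 0$.

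For the disconnection side I would apply Chebyshev to $X$. The pairwise joint isolation probability is $\pr(i,j\text{ isolated}) = \bigl((n-d)(n-d-1)/(n(n-1))\bigr)^m$, which is at most $\pr(i\text{ isolated})^2 = \bigl((n-d)/n\bigr)^{2m}$ (the inequality reducing to $d \geq 0$). Hence $\Var X \leq \E X$, and Chebyshev gives $\pr(X=0)\leq 1/\E X \to 0$; so $G_{nmd}$ has an isolated node whp and is disconnected.

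For the connection side I would union-bound over vertex cuts: disconnection implies a nonempty $S \subsetneq \{1,\ldots,n\}$ with $\abs{S}\leq n/2$ such that no $V_j$ crosses the partition, and so
\[
 \pr(G_{nmd}\text{ disconnected}) \leq \sum_{k=1}^{\lfloor n/2\rfloor}\binom{n}{k}P_k^m, \qquad P_k = \frac{\binom{k}{d}+\binom{n-k}{d}}{\binom{n}{d}}.
\]
I would show this sum is $o(1)$ under $\omega_n := -\log n - m\log(1-d/n) \to \infty$, using the estimates $\binom{n-k}{d}/\binom{n}{d}\leq (1-d/n)^k$ (termwise from $(n-d-i)/(n-i)\leq (n-d)/n$), $\binom{k}{d}/\binom{n}{d}\leq (k/n)^d$, and $\binom{n}{k}\leq (en/k)^k$. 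For $k < d$ the first binomial vanishes and $\binom{n}{k}P_k^m\leq \exp\bigl(k(1-\log k - \omega_n)\bigr)$ produces a geometric-type sum tending to zero; when $d > n/2$ this already exhausts the whole sum.

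The main obstacle is the intermediate range $d\le k\le n/2$ (nonempty only when $d\le n/2$), where both binomials contribute and the crude union $P_k\le 2(1-d/n)^k$ would introduce a prohibitive factor $2^m$. My plan is to use the refinement $P_k\le (1-d/n)^k(1+\eta_k)$ with $\eta_k := (k/n)^d/(1-d/n)^k$, noting that for $d\le k\le n/2$ one has $\eta_k\le 1$: indeed, $(k/n)^d\le (1-k/n)^d\le (1-d/n)^k$, the last step from the decreasing monotonicity of $x\mapsto \log(1-x/n)/x$. Then $(1+\eta_k)^m\le e^{m\eta_k}$, and I would split the $k$-sum at $k=\alpha n$ for a fixed $\alpha\in(0,1/2)$: the range $k\le \alpha n$ is handled by tightening the decay exponent (using that $k/n$ is small, so $(1-P_k) \ge (2-o(1))k(n-k)/n^2 \cdot $ analogues of the hypergeometric "crossing" probability, dominating $m\eta_k$ for $k$ bounded), while the range $k\in[\alpha n,n/2]$ is handled by the uniform bound $\binom{n}{k}P_k^m\le 2^n(\alpha^d+(1-\alpha)^d)^m$, which is $o(1)$ because the connection hypothesis forces $m|\log(\alpha^d+(1-\alpha)^d)|$ to grow faster than $n\log 2$.
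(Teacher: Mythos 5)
Your disconnection half is correct and matches the paper's mechanism (Propositions~\ref{the:MeanIsolated} and~\ref{the:YesIsolatedRIG}): $\E X = n(1-d/n)^m = e^{\lambda}$, the negative association giving $\pr(i,j\text{ both isolated})\le \pr(i\text{ isolated})^2$, hence $\Var X\le \E X$ and $\pr(X=0)\le 1/\E X\to 0$ when $\lambda\to\infty$. The identities and inequalities are all right.

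For the connection half you correctly identify the union bound over cuts and the two sub-bounds $\binom{n-k}{d}/\binom{n}{d}\le (1-d/n)^k$ and $\binom{k}{d}/\binom{n}{d}\le (k/n)^d$ (these are the content of Lemma~\ref{the:Shotgun}), and your $\eta_k\le 1$ claim is correct via monotonicity of $t\mapsto\log(1-t)/t$. But the treatment of the range $d\le k\le \alpha n$ is where the proposal fails to be a proof. The sentence asserting that $1-P_k\ge (2-o(1))k(n-k)/n^2$ ``times analogues of the hypergeometric crossing probability, dominating $m\eta_k$ for $k$ bounded'' is not a coherent estimate, and the qualifier ``for $k$ bounded'' gives the game away: the range $d\le k\le \alpha n$ contains $k$ of order $n$, precisely where $m\eta_k$ is largest (e.g.\ $d=2$, $k=\alpha n$ gives $\eta_k\asymp \alpha^2 e^{2\alpha}$, a constant, so $m\eta_k\asymp m$). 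What you would actually need, and have not shown, is that $m\eta_k$ is uniformly dominated by the negative part of the exponent $k(1-\log k)+k\lambda$ over the \emph{whole} intermediate range; with a fixed split point $\alpha n$ this requires showing something like $\eta_k\le (1-\varepsilon)k\,\abs{\log(1-d/n)}$ uniformly for $d\le k\le\alpha n$, a genuinely delicate computation. The paper sidesteps this with a scale-dependent split $n_0 = (m^{-1}n^2)\wedge(n/2)$: the correction factor in Lemma~\ref{the:CutBoundDet} is bounded by $1+r^2/(n-r)^2$, so $q_r^m(d)\le e^{mr^2/(n-r)^2}(1-d/n)^{rm}$, and the choice of $n_0$ forces $mr^2/(n-r)^2\le 4r$ on $r\le n_0$, producing a clean geometric series $\sum_r e^{(\lambda+5)r}$; the residual range $r>n_0$ is handled by the crossing bound $q_r(d)\le 1-2r(n-r)/n^2$ together with $\sum_r\binom{n}{r}=2^n$, yielding $(2/e)^n$. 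Your intermediate-range step is the point at which you need an analogue of that device, and as written it is a gap, not an argument.

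One further small remark: the paper factors all four of Theorems~\ref{the:Hypergraph}--\ref{the:RIGRegular} through a single shotgun model $H^*_{nmF}$, so the proof of Theorem~\ref{the:RIGRegular} is a one-line reduction to Theorem~\ref{the:SRHRegular} (2-section preserves connectivity and isolated nodes); working directly with $G_{nmd}$ as you do is fine but loses that economy.
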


\subsection{Shotgun random hypergraphs}
\label{sec:SRH}

The results in Sections~\ref{sec:Hypergraph}--\ref{sec:RIG} will be proved by analysing a more general random hypergraph model defined as follows. Fix a probability distribution $F = f^{(1)} \times \cdots \times f^{(m)}$ on
$\{0,\dots,n\}^m$, and consider a random hypergraph $H^*_{nmF}$ with node set $\{1,\dots,n\}$ and hyperedge set\footnote{\updated{The cardinality of the hyperedge set may be less than $m$ due to possible duplicates among $V_1,\dots,V_m$.}} $\{V_1,\dots,V_m\}$, where the sets $V_1,\dots, V_m \subset \{1,\dots,n\}$ are mutually independent and such that the probability mass function of $V_k$ equals $A \mapsto  f^{(k)}(\abs{A}) \binom{n}{\abs{A}}^{-1}$.
The 2-section of $H^*_{nmF}$ corresponds to a generalisation of the passive random intersection graph in \cite{Godehardt_Jaworski_2001,Godehardt_Jaworski_Rybarczyk_2007}, and a special case of a Bernoulli superposition graph model in \cite{Bloznelis_Karjalainen_Leskela_2022,Bloznelis_Karjalainen_Leskela_2023,Bloznelis_Leskela_2023,Grohn_Karjalainen_Leskela_2024,Karjalainen_VanLeeuwaarden_Leskela_2018,Petti_Vempala_2022}.
Denote
\begin{equation}
 \label{eq:FMoment}
 (F)_{r} \weq \frac{1}{m} \sum_{k=1}^{m} \sum_{x=2}^n x^r f^{(k)}(x).
\end{equation}

\begin{theorem}
\label{the:SRH}
For any $m=m_n$ and $F = F_n$ such that $(F)_2 \lesim (F)_0$,
\[
 \pr(\text{$H^*_{nmF}$ is connected})
 \wto
 \begin{cases}
  0 &\quad \text{if $\log n - \frac{m}{n} (F)_{1} \to +\infty$}, \\
  1 &\quad \text{if $\log n - \frac{m}{n} (F)_{1} \to -\infty$}.
 \end{cases}
\]
\end{theorem}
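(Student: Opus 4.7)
The proof follows the standard two-moment recipe applied to the number $Z$ of \emph{isolated nodes}, those not contained in any hyperedge of size $\ge 2$. By mutual independence of $V_1,\dots,V_m$ and the identity $\pr(i \in V_k,\, \abs{V_k} \ge 2) = \alpha_k := \frac{1}{n}\sum_{x \ge 2} x f^{(k)}(x)$, one has $\E[Z] = n \prod_{k=1}^m (1-\alpha_k)$, so
\[
\log \E[Z] \weq \log n - \frac{m}{n}(F)_1 + R_n, \qquad R_n \weq \sum_{k=1}^m \bigl(\log(1-\alpha_k) + \alpha_k\bigr).
\]
The analytic heart is the remainder estimate $\abs{R_n} \lesim \sum_k \alpha_k^2 \le n^{-2} m (F)_2$, obtained from $\abs{\log(1-t) + t} \lesim t^2$ (valid once $\alpha_k < 1/2$, which may be assumed in the relevant regime) combined with Cauchy--Schwarz $\alpha_k^2 \le n^{-2}\sum_{x \ge 2} x^2 f^{(k)}(x)$. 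The hypothesis $(F)_2 \lesim (F)_0 \le 1$ makes $R_n = o(1)$. Applying Cauchy--Schwarz once more over the $k$-indices also yields $(F)_1^2 \le (F)_0 (F)_2 \lesim (F)_0^2$; combined with the trivial $(F)_1 \ge 2(F)_0$, this gives $(F)_0 \asymp (F)_1$, so a single scalar $(F)_1$ governs the threshold.

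For the connected direction ($\log n - \frac{m}{n}(F)_1 \to -\infty$), we have $\log \E[Z] \to -\infty$, hence no isolated nodes whp. To rule out components of intermediate size $k \in \{2,\dots,\lfloor n/2 \rfloor\}$, I would use a cut-based union bound: $H^*_{nmF}$ is disconnected iff some nonempty proper $S \subset V$ admits no crossing hyperedge, so
\[
\pr(\text{disconnected}) \wle \sum_{k=1}^{\lfloor n/2 \rfloor} \binom{n}{k} \prod_{j=1}^m r_{j,k}, \qquad r_{j,k} \weq \sum_x f^{(j)}(x)\,\frac{\binom{k}{x}+\binom{n-k}{x}}{\binom{n}{x}}.
\]
Using $\binom{k}{x}/\binom{n}{x} \le (k/n)^x$ and the elementary bound $\lambda^x + (1-\lambda)^x \le 1 - 2\lambda(1-\lambda)$ for $x \ge 2$ with $\lambda = k/n \in (0,1/2]$, one gets $r_{j,k} \le 1 - 2\lambda(1-\lambda)\sum_{x \ge 2} f^{(j)}(x)$, whence $\prod_j r_{j,k} \le \exp(-k m (F)_0 / n)$ (using $2\lambda(1-\lambda) \ge \lambda$). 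Combined with $\binom{n}{k} \le (en/k)^k$, the $k$-th summand is bounded by $\exp(k[\log(en/k) - m(F)_0/n])$, which under $(F)_0 \asymp (F)_1$ and $\frac{m}{n}(F)_1 \gg \log n$ decays geometrically in $k$; the sum over $k \ge 2$ is $o(1)$.

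For the disconnected direction ($\log n - \frac{m}{n}(F)_1 \to +\infty$), $\E[Z] \to \infty$ and I would close via the second moment method. For distinct $i, j$,
\[
\pr(i, j \text{ both isolated}) \weq \prod_{k=1}^m \left(1 - \frac{1}{n(n-1)} \sum_{x \ge 2} x(2n-x-1)\, f^{(k)}(x) \right),
\]
and comparing this with $(1-\alpha_k)^2$ via Taylor expansion, while reusing the $m(F)_2/n^2$ remainder bound, gives $\E[Z^2]/(\E[Z])^2 = 1 + o(1)$, so $\pr(Z \ge 1) \to 1$ by Paley--Zygmund. The most delicate ingredient is the small-cut estimate for $2 \le k \le \lfloor n/2 \rfloor$ in the connected direction: the bound must remain tight uniformly in $k$, and it is precisely here that $(F)_2 \lesim (F)_0$ is essential---through the consequence $(F)_0 \asymp (F)_1$---for making $(F)_1$ the correct effective degree parameter despite the inhomogeneity of hyperedge sizes.
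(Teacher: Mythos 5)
Your overall architecture---isolated-node first/second moment for the disconnected direction, cut-enumeration for the connected direction---matches the paper. The second-moment side is essentially right, modulo a small gap: to use $\abs{\log(1-t)+t}\lesssim t^2$ with a uniform constant you need $\max_k \alpha_k$ bounded away from $1$, and the claim ``$(F)_2 \lesssim (F)_0 \le 1$ makes $R_n = o(1)$'' is not self-contained, since $R_n \lesssim m n^{-2}(F)_2$ also requires a bound on $m$. In the regime $\mu=\log n - \frac{m}{n}(F)_1\to+\infty$ both points are recoverable (one deduces $m(F)_1 \lesssim n\log n$ and hence $\max_k\alpha_k\to 0$ and $mn^{-2}(F)_2\to 0$), which is exactly what the paper's Proposition~\ref{the:ExpectedIsolatedNewer} does; you should say so rather than assume it.

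The serious gap is in the connected direction. Your cut bound replaces $r_{j,k}$ by $1 - 2\lambda(1-\lambda)\pr(X_j\ge 2)$, i.e.\ you bound $q_r(x)$ uniformly over $x\ge 2$ by $q_r(2)$, and this collapses $(F)_1$ to $(F)_0$. Since $(F)_0 \le \tfrac12 (F)_1$ always, you have in the worst case only $m(F)_0/n \le \tfrac12\,\tfrac{m}{n}(F)_1$, and the hypothesis $\log n - \frac{m}{n}(F)_1\to -\infty$ only gives $\frac{m}{n}(F)_1 \ge \log n + \omega_n$ for some $\omega_n\to\infty$, not $\frac{m}{n}(F)_1\gg\log n$. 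Thus the exponent $\log(en/k) - m(F)_0/n$ for $k=1$ is at least $1 + \log n - \tfrac12(\log n + \omega_n)$, which $\to +\infty$ whenever $\omega_n \ll \log n$; the first term of your sum already blows up. (Concretely: take every $f^{(j)}=f$ supported on $\{5\}$, so $(F)_0=1$, $(F)_1=5$, and let $m = \tfrac{n\log n}{5} + n\omega_n$ with $\omega_n=\log\log n$. Then $\mu\to-\infty$, yet $n\,e^{-m(F)_0/n} = n^{4/5}e^{-\omega_n}\to\infty$.) The constant $(F)_1/(F)_0$ is not ``just a constant'' here because it multiplies $\log n$; the threshold genuinely lives at $(F)_1$, not $(F)_0$. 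The paper handles this by using, for small $r$, the shotgun-type bound $q_r(x) \lesssim (1-x/n)^r$, which preserves the $x$-dependence, giving $\log Q_r \lesssim -\frac{mr}{n}(F)_1 + \text{(error)}$, and only switches to a $q_r(2)$-style bound (with $(F)_0$) for $r$ above a threshold $n_0$ where the binomial coefficient is no longer the dominant factor. You need this two-regime split with a first-moment-accurate estimate for small $r$; your single estimate does not close the argument.
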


Denote by $H^*_{nmd}$ a special instance of $H^*_{nmF}$ with $F$ being equal to the Dirac point mass at $(d,\dots, d) \in \{0,\dots, n\}^m$.

\begin{theorem}
\label{the:SRHRegular}
For any $m=m_n$ and $d=d_n$ such that $m \ge 1$ and $2 \le d \le n$,
\[
 \pr(\text{$H^*_{nmd}$ is connected})
 \wto
 \begin{cases}
  0 &\quad \text{if $\log n + m \log ( 1 - d/n ) \to +\infty$}, \\
  1 &\quad \text{if $\log n + m \log ( 1 - d/n ) \to -\infty$}.
 \end{cases}
\]
\end{theorem}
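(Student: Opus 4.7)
The strategy is the classical two-step recipe: a first-and-second moment argument on isolated nodes for the disconnected side, and a union bound over cuts for the connected side. Note that Theorem~\ref{the:SRH} does not directly apply to the regular case, because its hypothesis $(F)_2 \lesssim (F)_0$ reduces to $d = O(1)$; to reach arbitrary $d \in \{2,\dots,n\}$ one needs the exact threshold $\log n + m\log(1-d/n)$ rather than the linearised $\log n - md/n$.

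\emph{Disconnected direction} ($\log n + m\log(1-d/n) \to +\infty$). Let $X$ count the isolated nodes of $H^*_{nmd}$. Since the hyperedges $V_1,\dots,V_m$ are independent uniform $d$-subsets of $[n]$, each avoiding a given node with probability $1-d/n$, one has $\E[X] = n(1-d/n)^m = \exp(\log n + m\log(1-d/n)) \to \infty$. For two distinct nodes $i,j$,
\[
\pr(i,j\text{ both isolated}) \weq \bigl((1-d/n)(1-d/(n-1))\bigr)^m \wle (1-d/n)^{2m} \weq \pr(i\text{ iso})\pr(j\text{ iso}),
\]
so $\Var X \le \E X$ and Chebyshev yields $X \ge 1$ whp; hence $H^*_{nmd}$ is disconnected whp.

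\emph{Connected direction} ($\log n + m\log(1-d/n) \to -\infty$). Set $\alpha_n := -\log n - m\log(1-d/n) \to \infty$. A hypergraph is disconnected iff some partition $(S,S^c)$ with $1 \le |S| \le n/2$ has no crossing hyperedge, and the union bound gives
\[
\pr(H^*_{nmd}\text{ disc}) \wle \sum_{k=1}^{\lfloor n/2\rfloor}\binom{n}{k} p_k^m, \qquad p_k \weq \frac{\binom{k}{d} + \binom{n-k}{d}}{\binom{n}{d}} \weq q_k + r_k.
\]
For $1 \le k < d$, $r_k = 0$ and $q_k \le (1-d/n)^k$, because each factor in $q_k = \prod_{i=0}^{k-1}(1 - d/(n-i))$ is bounded by $1-d/n$. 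Using $\binom{n}{k}\le (en/k)^k$ and $m\log(1-d/n) = -\log n - \alpha_n$ yields $\binom{n}{k}p_k^m \le e^{k\log(e/k) - k\alpha_n} \le e^{1 - k\alpha_n}$, which sums geometrically to $o(1)$.

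The main obstacle is the range $d \le k \le n/2$, where $r_k > 0$ and the crude estimate $p_k \le 2q_k$ introduces a prohibitive factor $2^m$. The plan is a two-scale decomposition at $K^* \asymp (n/2)m^{-1/d}$. For $k \le K^*$, the ratio $r_k/q_k \le (k/(n-k))^d$ is small enough that $(1+r_k/q_k)^m \le e^{mr_k/q_k} = O(1)$, giving $\binom{n}{k}p_k^m = O(e^{1-k\alpha_n})$. For $K^* < k \le n/2$, the factor $q_k^m$ is itself so small that, using the identity $\binom{n}{k}q_k = \binom{n-d}{k}$ and the lower bound $m\beta \ge \log n + \alpha_n$ with $\beta := -\log(1-d/n)$, it dominates both the binomial coefficient and the unwanted $2^m$. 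The technical crux is calibrating $K^*$ so that the two regimes meet cleanly, and verifying the resulting bound uniformly across the whole parameter range, from sparse ($d$ bounded, $m \sim n\log n/d$ large) to dense ($d$ close to $n$, $m$ small).
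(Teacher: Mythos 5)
Your \emph{disconnected} direction is correct, and in fact essentially the paper's argument specialised to the Dirac case: the paper routes a Chebyshev/second-moment bound through Proposition~\ref{the:YesIsolatedRIG}, and when all $X_k \equiv d$ the variance correction term vanishes, reducing to exactly your computation $\Var X \le \E X$.

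The \emph{connected} direction, however, has a genuine gap that is not a matter of ``calibration.'' Your plan for $K^* < k \le n/2$ does not work. First, the threshold $K^* \asymp (n/2) m^{-1/d}$ is the wrong object: the useful relaxation is $\bigl(\tfrac{k}{n-k}\bigr)^d \le \bigl(\tfrac{k}{n-k}\bigr)^2 \le 4k^2/n^2$ (valid since $d\ge 2$ and $k\le n/2$), which gives $m\bigl(\tfrac{k}{n-k}\bigr)^d \le 4k$ for all $k \le n_0 := (n^2/m)\wedge (n/2)$, a $d$-\emph{independent} threshold. The resulting per-$k$ factor $e^{O(k)}$ is then absorbed into $e^{-k\alpha_n}$, giving $\binom{n}{k}p_k^m \le e^{(5-\alpha_n)k}$, so the ``first regime'' extends all the way to $n_0$, not just to your $K^*$. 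Second, and more importantly, for $k > n_0$ your proposed bound fails concretely: take $d=2$, $m = n\log n$, $k=n_0 = n/\log n$; then $\binom{n}{k} 2^m q_k^m \le (en/k)^k 2^m (1-d/n)^{km}$ has logarithm $\approx -n + (\log 2)\, n\log n - n \to +\infty$, so the $2^m$ is genuinely prohibitive there, and the identity $\binom{n}{k}q_k = \binom{n-d}{k}$ does nothing to remove it. The missing ingredient (Lemma~\ref{the:CutBoundDet}, bound~\eqref{eq:PairBound}) is elementary and $d$-independent: since $x \mapsto p_k(x)$ is nonincreasing for $x\ge 2$, the \emph{whole} cut probability satisfies $p_k \le p_k\big|_{x=2} = 1 - \frac{k(n-k)}{\binom{n}{2}} \le 1 - 2\tfrac{k}{n}\bigl(1-\tfrac{k}{n}\bigr)$, hence $p_k^m \le e^{-2m\frac{k}{n}(1-\frac{k}{n})} \le e^{-n}$ for $k > n_0$ (when $J_2 \ne \emptyset$, i.e.\ $m>2n$), which beats the crude $\binom{n}{k}\le 2^n$ with no $2^m$ ever appearing. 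In short: you should not split $p_k = q_k + r_k$ and estimate each piece by shotgun in the large-$k$ range; you should bound $p_k$ directly by its value at hyperedge size $2$.
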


The expected number of isolated nodes in $H^*_{nmd}$ equals $e^{\lambda}$ with
$\lambda = \log n + m \log ( 1 - d/n )$ (see Proposition~\ref{the:MeanIsolated}).
Therefore, Theorem \ref{the:SRHRegular} implies that the threshold for connectivity is the same as the threshold for the existence of isolated nodes in $H^*_{nmd}$.

\section{Discussion}
\label{sec:Discussion}

\subsection{Role of large hyperedges}

For uniform random hypergraphs, Poole \cite{Poole_2015} has shown that  if $d = O(1)$, then 
\[
 \pr(\text{$H_{nmd}$ is connected})
 \wto
 \begin{cases}
  0 &\quad \text{if $\log n - \frac{m}{n}d \to +\infty$}, \\
  1 &\quad \text{if $\log n - \frac{m}{n}d \to -\infty$}.
 \end{cases}
\]
This characterisation follows as a special instance of Theorem~\ref{the:HypergraphRegular},
because when $d = O(1)$, we may replace $\lambda = \log n + m \log(1-d/n)$ in Theorem~\ref{the:HypergraphRegular} by $\mu = \log n - \frac{m}{n}d$ (see Remark~\ref{rem:LargeHyperedges}). 
On the other hand, in regimes with $\frac{n}{\log n} \ll d \ll n$, property $\mu \to \infty$ does not in general imply that $\lambda \to \infty$, and it may happen that $H_{nmd}$ is whp connected even though $\log n - \frac{m}{n}d \to \infty$, see Example~\ref{exa:LargeHyperedges} below.

\begin{remark}
\label{rem:LargeHyperedges}
When hyperedge sizes are bounded by $d \lesim \frac{n}{\log n}$, then we may replace $\lambda = \log n + m \log(1-d/n)$ in Theorem~\ref{the:HypergraphRegular} by $\mu = \log n - \frac{m}{n}d$. To see why this is true, we note that (i) $\mu \to -\infty$ together with $\lambda \le \mu$ implies that $\lambda \to -\infty$. We also note that (ii) $\mu \to \infty$ implies that $m \lesim \frac{n}{d}\log n$, and together with $d \lesim \frac{n}{\log n}$ we find that (see Lemma~\ref{the:LogTaylor})
$0 \le \mu-\lambda \le m \frac{(d/n)^2}{1-d/n} \asymp m (d/n)^2
\lesim \frac{n}{d} (d/n)^2 \log n  
= (d/n) \log n 
= O(1)$.
\end{remark}

\begin{example}
\label{exa:LargeHyperedges}
Assume that $\log n \ll m \ll \log^2 n$ and
$d = \lfloor \frac{n}{m}( \log n - \omega ) \rfloor$ for some $1 \ll \omega \ll \frac{\log^2 n}{m}$. For example,
$m = \lfloor \log^{3/2} n \rfloor$ and $\omega = \frac{\log n}{m^{1/2}}$. Denoting
$\lambda = \log n \updated{+} m\log(1-d/n)$ and $\mu = \log n - \frac{m}{n}d$, we may verify (see below) that
\begin{equation}
 \label{eq:LargeHyperedges}
 \lambda \to \updated{-\infty},
 \qquad \mu \to \updated{+\infty},
 \qquad m \ll \binom{n}{d}^{1/2}.
\end{equation}
Theorems \ref{the:HypergraphRegular} and~\ref{the:SRHRegular} then tell that $H_{nmd}$ and $H^*_{nmd}$ are whp connected even though $\log n - \frac{m}{n}d \to \infty$.

To verify the limits in \eqref{eq:LargeHyperedges}, denote 
$d' = \frac{n}{m}( \log n - \omega )$. Then $d' \sim \frac{n}{m} \log n$ is bounded by $\frac{n}{\log n} \ll d' \ll n$. Therefore, $d = \floor{d'}$ also satisfies $d \sim \frac{n}{m} \log n$. Then $\mu = \log n - \frac{m}{n} d \ge \log n - \frac{m}{n} d' = \omega$ implies that $\mu \to \infty$. On the other hand, the inequality $\log(1-t) \le -t - \frac12 t^2$ for $t \ge 0$ implies that
\begin{align*}
 \lambda
 &\wle \log n - \frac{m}{n}(d'-1) - \frac12 m (d/n)^2 \\
 &\wle \omega + \frac{m}{n} - \frac12 m (d/n)^2,
\end{align*}
so that $\lambda \le \omega + O(1) - \frac12(1+o(1)) \frac{\log^2 n}{m}$, and we conclude that $\lambda \to -\infty$. We also note that $d \ll n$ implies that $\log(n/d) \gg 1$.
Therefore, $\binom{n}{d} \ge (\frac{n}{d})^d$ implies that
$\log \binom{n}{d} \ge d \log(n/d) \gg d \sim \frac{n}{m}\log n \gg \frac{n}{\log n}$.
Because $\log m \le 2 \log\log n - \omega(1)$, we see that $\log \binom{n}{d} - 2 \log m \to \infty$,
and hence $m \ll \binom{n}{d}^{1/2}$.
\end{example}

Similarly, Godehardt, Jaworski, and Rybarczyk \cite{Godehardt_Jaworski_Rybarczyk_2007} showed that for a random intersection graph $G_{nmf}$ with $f=f_n$ having support universally bounded by $\max\{x: f(x) \ne 0\} = O(1)$, $G_{nmf}$ is whp connected when $\log - \frac{m}{n} (f)_1 \to - \infty$ 
and whp disconnected when $ \log n - \frac{m}{n} (f)_1 \to +\infty$. Theorem~\ref{the:RIG} extends this characterisation to random intersection graphs where the support of $f$ is not universally bounded.
Theorem~\ref{the:RIGRegular} provides a sharper characterisation for degenerate distributions which is relevant in cases with $d \gg \frac{n}{\log n}$.

\subsection{Role of variable hyperedge sizes}

Example~\ref{exa:SmallAndFull} below shows that in some extremal cases, $H^*_{nmF}$ have the expected number of isolated nodes converging to infinity, also implying that $\log n - \frac{m}{n} (F)_{1} \to +\infty$, while still producing graph samples which are whp connected.  Therefore, some regularity conditions, such as the condition $(F)_2 \lesim (F)_0$, must be imposed for the characterisation in Theorem~\ref{the:SRH} to be valid.

\begin{example}
\label{exa:SmallAndFull}
Consider a random hypergraph $H^*_{nmF}$ with $F = f \times \cdots \times f$, where $f = (1-p)\delta_2 + p\delta_n$, $p = 1 - n^{-1/(2m)}$ and $m \le \frac16 n \log n$.
In this case $H^*_{nmF}$ contains a full hyperedge $[n]$ with probability $1-(1-p)^m = 1 - n^{-1/2}$, and therefore
\[
 \pr(\text{$H^*_{nmF}$ connected})
 \wge 1 - n^{-1/2}.
\]
In this case any particular node is isolated with probability $P_1 = ((1-p)(1-2/n))^m$.
We also note (Lemma~\ref{the:LogTaylor}) that
$
 \log(1-2/n)
 \ge - \frac{2/n}{1-2/n}
 \ge - 2(1+o(1)) n^{-1},
$
so that $(1-2/n)^m \ge n^{-1/3 - o(1)}$. Hence the expected number of isolated nodes equals
$
 n P_1
 = n^{1/2} (1-2/n)^m
 \ge n^{1/6-o(1)}.
$
We conclude that $H^*_{nmF}$ is whp connected, even though the expected number of isolated nodes is at least $n^{1/6-o(1)}$. The same conclusions are valid for the random intersection graph $G_{nmf}$ obtained as the 2-section of $H^*_{nmF}$.
\end{example}

\subsection{Conclusion}

This article's main findings indicate that under mild regularity assumptions, the average hyperedge size suffices to characterise connectivity, and other characteristics of the hyperedge size distribution, such as higher moments or heavy tails, are not relevant to connectivity.  This finding is in line with similar studies related to the giant component in random graphs (see Deijfen, Rosengren, and Trapman \cite{Deijfen_Rosengren_Trapman_2018}; Leskelä and Ngo \cite{Leskela_Ngo_2017}). The analysis in the present article was restricted to hyperedge size distributions with a universally bounded second moment.  Whether or not this regularity condition may be relaxed remains a problem of further research.

\section{Isolated nodes}
\label{sec:Isolated}

In this section we will analyse the number of isolated nodes in the shotgun random hypergraph $H^*_{nmF}$ defined in Section~\ref{sec:SRH}. Recall that a node of a hypergraph is called \emph{isolated} if it is not contained in any hyperedge of size at least two.
\updated{
Section~\ref{sec:ExistenceIsolatedNodes} summarises our findings
in Propositions~\ref{the:MeanIsolated}--\ref{the:YesIsolatedRIG}.
Section~\ref{sec:IsolationProbabilities} presents results for isolation probabilities that used
in Section~\ref{sec:ProofsIsolatedRIG} to prove Propositions~\ref{the:MeanIsolated}--\ref{the:YesIsolatedRIG}.
}
\updated{
\subsection{Existence of isolated nodes}
\label{sec:ExistenceIsolatedNodes}
}

\begin{proposition}
\label{the:MeanIsolated}
The expected number of isolated nodes in $H^*_{nmF}$ equals $\exp(\lambda)$ where 
\begin{equation}
 \label{eq:MeanIsolated}
 \lambda
 \weq \log n + \sum_{k=1}^m \log\left( 1 - \frac{1}{n} \sum_{x=2}^n x f^{(k)}(x) \right).
\end{equation}
\end{proposition}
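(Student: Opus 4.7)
The plan is to compute the probability that a fixed node is isolated, and then sum over all $n$ nodes using linearity of expectation. By symmetry among the nodes in the sampling of each $V_k$ (the distribution $A \mapsto f^{(k)}(|A|)\binom{n}{|A|}^{-1}$ is exchangeable), every node will have the same isolation probability, so the expected number of isolated nodes will be $n$ times that common value.

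First, I would fix a node $i \in \{1,\dots,n\}$ and a hyperedge index $k$. For each size $x \ge 2$, the number of $x$-subsets of $\{1,\dots,n\}$ containing $i$ is $\binom{n-1}{x-1}$, each having probability $f^{(k)}(x)\binom{n}{x}^{-1}$ under the distribution of $V_k$. Summing gives
\[
 \pr(i \in V_k,\ \abs{V_k} \ge 2)
 \weq \sum_{x=2}^n \binom{n-1}{x-1} \frac{f^{(k)}(x)}{\binom{n}{x}}
 \weq \frac{1}{n} \sum_{x=2}^n x f^{(k)}(x),
\]
using the identity $\binom{n-1}{x-1}/\binom{n}{x} = x/n$. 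Therefore the probability that $V_k$ fails to witness $i$ as non-isolated equals $1 - \frac{1}{n}\sum_{x=2}^n x f^{(k)}(x)$.

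Next, since $V_1,\dots,V_m$ are mutually independent, the events $\{i \notin V_k \text{ or } \abs{V_k} \le 1\}$ are independent across $k$, so the probability that $i$ is isolated equals the product $\prod_{k=1}^m \bigl(1 - \frac{1}{n}\sum_{x=2}^n x f^{(k)}(x)\bigr)$. Linearity of expectation then yields that the expected number of isolated nodes equals
\[
 n \prod_{k=1}^m \left( 1 - \frac{1}{n} \sum_{x=2}^n x f^{(k)}(x) \right),
\]
and taking the logarithm gives exactly $\lambda$ as defined in \eqref{eq:MeanIsolated}, completing the proof.

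There is no real obstacle here; the statement is essentially a bookkeeping identity and the only subtlety is the combinatorial simplification $\binom{n-1}{x-1}/\binom{n}{x} = x/n$ (valid for $1 \le x \le n$), which lets the sum over $x$ collapse into the first moment $\sum_{x \ge 2} x f^{(k)}(x)$ appearing in the statement. The factorisation over $k$ is immediate from the independence of the $V_k$.
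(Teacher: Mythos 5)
Your proof is correct and takes essentially the same route as the paper's: the paper factors the isolation probability through Lemma~\ref{the:IsolationProbabilityRIG} as $P_1 = \prod_{k=1}^m \E\phi_1(X_k)$ with $\phi_1(x) = 1 - \frac{x\,1(x\ge 2)}{n}$, which is exactly the per-layer probability $1 - \frac{1}{n}\sum_{x\ge 2} x f^{(k)}(x)$ you compute via $\binom{n-1}{x-1}/\binom{n}{x} = x/n$, followed by the same independence factorisation and $\E I = nP_1$.
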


Proposition~\ref{the:MeanIsolated} shows that 
the expected number of isolated nodes tends to zero when $\lambda \to -\infty$ and to infinity when $\lambda \to +\infty$. The following result tells that this characterisation may be simplified under a mild regularity condition.

\begin{proposition}
\label{the:ExpectedIsolatedNewer}
Define $\lambda$ by \eqref{eq:MeanIsolated}, and let $\mu = \log n - \frac{m}{n} (F)_1$ with the moments $(F)_r$ defined by \eqref{eq:FMoment}. 
Then $\lambda \le \mu$. Moreover, if $(F)_1 \lesim m^{-1} n \log n$ and $(F)_2 \lesim (F)_0$, 
then \updated{$\lambda = \mu - O(\frac{\log n}{n})$}.
\end{proposition}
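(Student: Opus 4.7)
Setting $a_k = \frac{1}{n}\sum_{x=2}^n x f^{(k)}(x) \in [0,1]$, so that $\sum_k a_k = \frac{m}{n}(F)_1$, $\mu = \log n - \sum_k a_k$, and $\lambda = \log n + \sum_k \log(1-a_k)$, the inequality $\lambda \le \mu$ is immediate from $\log(1-t) \le -t$ applied termwise (with $\log 0 = -\infty$ in degenerate cases where some $a_k = 1$).

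For the quantitative bound on $\mu - \lambda = \sum_k [-a_k - \log(1-a_k)]$, I plan to invoke Lemma~\ref{the:LogTaylor} in the form $-t - \log(1-t) \le \frac{t^2}{2(1-t)}$ for $t \in [0,1)$. The obstacle is that $1-a_k$ could a priori be arbitrarily small, so the main step is to show that $\max_k a_k$ stays bounded away from $1$. I would accomplish this by estimating $\sum_k a_k^2$ via Cauchy--Schwarz with $\sum_x f^{(k)}(x) \le 1$,
\begin{equation*}
a_k^2
\wle \frac{1}{n^2}\Bigl(\sum_{x=2}^n x f^{(k)}(x)\Bigr)^2
\wle \frac{1}{n^2}\sum_{x=2}^n x^2 f^{(k)}(x),
\end{equation*}
so that summation yields $\sum_k a_k^2 \le m(F)_2/n^2$.

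Now the two hypotheses combine: the elementary bound $(F)_1 \ge 2(F)_0$ together with $(F)_1 \lesim m^{-1} n \log n$ gives $m(F)_0 \lesim n \log n$, and then $(F)_2 \lesim (F)_0$ yields $\sum_k a_k^2 \lesim (\log n)/n$. In particular $\max_k a_k \le (\sum_k a_k^2)^{1/2} = o(1)$, so $a_k \le 1/2$ for all $k$ and all sufficiently large $n$. Under this uniform bound the Taylor remainder simplifies to $-a_k - \log(1-a_k) \le a_k^2$, and summing yields $\mu - \lambda \le \sum_k a_k^2 \lesim (\log n)/n$, as claimed. The main difficulty throughout is this uniform control of $\max_k a_k$; without both moment hypotheses acting together, the $1/(1-a_k)$ factor in the Taylor remainder could blow up and the stated asymptotic could fail.
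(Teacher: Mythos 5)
Your proof is correct and follows essentially the same route as the paper: both use $\log(1-t)\le -t$ for $\lambda\le\mu$, then Jensen/Cauchy--Schwarz to get $a_k^2 \le n^{-2}(f^{(k)})_2$, deduce $\max_k a_k = o(1)$ from the two moment hypotheses, and finally sum a Taylor remainder bound to obtain $\mu - \lambda \lesssim m(F)_2/n^2 \lesssim (\log n)/n$. (A tiny nit: Lemma~\ref{the:LogTaylor} actually gives the slightly weaker $-t-\log(1-t)\le t^2/(1-t)$ rather than $t^2/(2(1-t))$, which the paper obtains directly from its first-order bound $\log(1-t)\ge -t/(1-t)$; this changes nothing in the conclusion.)
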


Proposition~\ref{the:MeanIsolated} combined with Markov's inequality implies that all nodes in $H^*_{nmF}$ are nonisolated whp when $\lambda \to -\infty$.  The converse is not true in general, as Example~\ref{exa:SmallAndFull} confirms that $\lambda \to +\infty$ does not in general imply that $H^*_{nmF}$ contains an isolated node whp.  Therefore, some extra assumptions are needed.
Proposition~\ref{the:YesIsolatedRIG} provides such regularity condition. The proofs of these propositions are postponed to Section~\ref{sec:ProofsIsolatedRIG}.

\begin{proposition}
\label{the:YesIsolatedRIG}
For $\lambda$ defined by \eqref{eq:MeanIsolated},
\[
 1-e^{-\lambda} \wle
 \pr( \text{$H^*_{nmF}$ contains no isolated nodes} )
 \wle e^{-\lambda} + \exp\left( \sum_{k=1}^m \frac{\Var(Z_k)}{(\E Z_k)^2} \right) - 1,
\]
where $Z_k = 1 - n^{-1}X_k 1(X_k \ge 2)$, and $X_k$ is an \updated{$f^{(k)}$}-distributed random integer.
\end{proposition}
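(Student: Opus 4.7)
The plan is to let $N = \sum_{i=1}^n 1_{B_i}$ count the isolated nodes, where $B_i$ is the event that node $i$ is isolated, and exploit the independence of the underlying sets $V_1,\dots,V_m$ to factor $B_i = \bigcap_{k=1}^m B_i^{(k)}$ with $B_i^{(k)} = \{\abs{V_k} \le 1\text{ or } i \notin V_k\}$. Conditioning on $\abs{V_k} = x$ and averaging over $f^{(k)}$ gives $\pr(B_i^{(k)}) = \E Z_k$, so that $\pr(B_i) = \prod_k \E Z_k$ and, by Proposition~\ref{the:MeanIsolated}, $\E N = e^{\lambda}$. For the lower bound on $\pr(N = 0)$ I would use the first-moment method: Markov's inequality gives $\pr(N = 0) \ge 1 - \E N = 1 - e^{\lambda}$, which implies the stated $1 - e^{-\lambda}$ bound in the only regime ($\lambda \le 0$) where it is not already trivial. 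For the upper bound, the natural tool is the second-moment method: $\pr(N = 0) \le \pr(\abs{N - \E N} \ge \E N) \le \Var(N)/(\E N)^2$.

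Next, I would expand $\Var(N)/(\E N)^2$ using node symmetry. Writing $\E N^2 = n\pr(B_1) + n(n-1)\pr(B_1 \cap B_2)$ and using $n\pr(B_1) = \E N = e^{\lambda}$, I get
\[
 \frac{\Var(N)}{(\E N)^2} \weq e^{-\lambda} + \Bigl(1 - \frac{1}{n}\Bigr) \frac{\pr(B_1 \cap B_2)}{\pr(B_1)^2} - 1.
\]
Independence of the $V_k$ then factors the pairwise ratio as $\pr(B_1 \cap B_2)/\pr(B_1)^2 = \prod_{k=1}^m \pr(B_1^{(k)} \cap B_2^{(k)})/(\E Z_k)^2$, so everything reduces to controlling these per-hyperedge factors.

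The main obstacle is spotting the right per-hyperedge inequality; I claim $\pr(B_1^{(k)} \cap B_2^{(k)}) \le \E[Z_k^2]$. Conditionally on $\abs{V_k} = x$, both sides are $1$ when $x \le 1$, and for $x \ge 2$ the inequality becomes the elementary binomial bound $(n-x)(n-x-1)/(n(n-1)) \le ((n-x)/n)^2$, verified by cross-multiplication (it reduces to $0 \le x$). Averaging over $f^{(k)}$ and dividing by $(\E Z_k)^2$ gives $\pr(B_1^{(k)} \cap B_2^{(k)})/(\E Z_k)^2 \le 1 + \Var(Z_k)/(\E Z_k)^2$. Multiplying across $k$ and using $1 + t \le e^t$ factorwise yields
\[
 \frac{\pr(B_1 \cap B_2)}{\pr(B_1)^2} \wle \prod_{k=1}^m \Bigl(1 + \frac{\Var(Z_k)}{(\E Z_k)^2}\Bigr) \wle \exp\Bigl(\sum_{k=1}^m \frac{\Var(Z_k)}{(\E Z_k)^2}\Bigr).
\]
Plugging this back into the expansion of $\Var(N)/(\E N)^2$ and using $1 - 1/n \le 1$ yields the stated upper bound. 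Once the per-hyperedge inequality is in hand, the rest is routine first- and second-moment bookkeeping.
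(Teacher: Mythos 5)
Your proposal is essentially the paper's proof: first moment (Markov) for the lower bound, Chebyshev with the node-symmetry expansion $\Var(N)/(\E N)^2 = e^{-\lambda} + (1-1/n)P_2/P_1^2 - 1$ for the upper bound, then the per-hyperedge bound $P_2/P_1^2 \le \exp\bigl(\sum_k \Var(Z_k)/(\E Z_k)^2\bigr)$. The only packaging difference is in that last step: you check the pointwise inequality $\phi_2(x) \le \phi_1(x)^2$ (i.e.\ $\pr(B_1^{(k)}\cap B_2^{(k)}\mid X_k) \le Z_k^2$) directly and then use $\E Z_k^2/(\E Z_k)^2 = 1 + \Var(Z_k)/(\E Z_k)^2 \le e^{\Var(Z_k)/(\E Z_k)^2}$, whereas the paper's Lemma~\ref{the:PairIsolationRatioRIG} reaches the same inequality via the total-covariance decomposition $\Cov(I_{1k},I_{2k}) = \Var(\phi_1(X_k)) + \E(\phi_2(X_k)-\phi_1^2(X_k))$, dropping the non-positive second term. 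These are the same elementary computation.

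One point you should not gloss over: Markov gives $\pr(N=0) \ge 1 - e^{\lambda}$, and that is also what the paper's own proof derives (it writes ``$\pr(I\ne 0)\le e^\lambda$''); the $1-e^{-\lambda}$ in the statement is evidently a sign typo for $1-e^{\lambda}$, consistent with how the proposition is invoked in Proposition~\ref{the:ConnectivityConstantSizes} ($\pr(\cI)\le e^\lambda$). Your attempted reconciliation has the regimes backwards: $1-e^{-\lambda}\le 0$ exactly when $\lambda\le 0$, so that is where the stated bound \emph{is} trivial, and for $\lambda>0$ (where it is nontrivial) Markov gives only the vacuous $1-e^\lambda<0$. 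So as stated, the lower bound does not follow from your argument (nor from the paper's); you have proved the corrected bound $1-e^\lambda$.
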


\subsection{Analysis of isolation probabilities}
\label{sec:IsolationProbabilities}

\updated{
Denote by $P_1$ the probability that any particular node is isolated in $H_{nmf}^*$,
and by $P_2$ the probability that any particular two nodes are both isolated in $H_{nmf}^*$.
We will next derive formulas and bounds for $P_1$ and $P_2$ in terms of functions
}
\begin{align}
 \label{eq:phi1}
 \phi_1(x) &\weq 1 - \frac{x 1(x \ge 2)}{n}, \\
 \label{eq:phi2}
 \phi_2(x) &\weq \left(1-\frac{x1(x \ge 2)}{n} \right) \left(1-\frac{x1(x \ge 2)}{n-1} \right).
\end{align}

\begin{lemma}
\label{the:IsolationProbabilityRIG}
The probability that any particular node is isolated in $H^*_{nmF}$ is given by
$P_1 = \prod_{k=1}^m (\sum_{x=0}^n \phi_1(x) f^{(k)}(x))$, and the
probability that any particular two nodes both are isolated in $H^*_{nmF}$ equals
$P_2 = \prod_{k=1}^m (\sum_{x=0}^n \phi_2(x) f^{(k)}(x))$ with
$\phi_1$ defined by \eqref{eq:phi1} and
$\phi_2$ defined by \eqref{eq:phi2}.
\end{lemma}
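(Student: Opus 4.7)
The plan is to compute both probabilities by exploiting the mutual independence of $V_1,\dots,V_m$, which lets the joint isolation event factor over hyperedges. A node $i$ fails to be isolated precisely when some $V_k$ with $\abs{V_k} \ge 2$ contains $i$; equivalently, $i$ is isolated iff for every $k$, either $\abs{V_k} \le 1$ or $i \notin V_k$. By independence, $P_1 = \prod_{k=1}^m p_k$, where $p_k$ denotes the probability that hyperedge $V_k$ either has size at most one or avoids $i$.

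First I would condition on the size $\abs{V_k} = x$. Given $\abs{V_k} = x$, the set $V_k$ is uniform on $x$-subsets of $\{1,\dots,n\}$, so $\pr(i \notin V_k \mid \abs{V_k} = x) = \binom{n-1}{x}/\binom{n}{x} = 1 - x/n$. For $x \le 1$ the hyperedge is harmless, contributing weight $1$, while for $x \ge 2$ the weight is $1 - x/n$. These two cases combine exactly into $\phi_1(x) = 1 - \frac{x \mathbf{1}(x \ge 2)}{n}$. Averaging against the size distribution of $V_k$ gives $p_k = \sum_{x=0}^n \phi_1(x)\, f^{(k)}(x)$, and taking the product over $k$ yields the stated formula for $P_1$.

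The computation for $P_2$ is analogous. Two distinct nodes $i,j$ are both isolated iff for every $k$, either $\abs{V_k} \le 1$ or $V_k \cap \{i,j\} = \emptyset$; independence again factorises the probability. Conditioning on $\abs{V_k} = x$, the uniform distribution of $V_k$ over $x$-subsets gives
\[
 \pr(V_k \cap \{i,j\} = \emptyset \mid \abs{V_k} = x) \weq \frac{\binom{n-2}{x}}{\binom{n}{x}} \weq \left(1 - \frac{x}{n}\right)\left(1 - \frac{x}{n-1}\right).
\]
Handling $x \le 1$ with weight $1$ and $x \ge 2$ with the above product recovers $\phi_2(x)$, and multiplying the resulting averages across $k$ produces the claimed formula for $P_2$.

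There is no substantive obstacle; the only care needed is the bookkeeping that hyperedges of size $\le 1$ do not contribute to isolation, which is absorbed cleanly into the indicator $\mathbf{1}(x \ge 2)$ inside $\phi_1$ and $\phi_2$. The hypergeometric identity $\binom{n-j}{x}/\binom{n}{x} = \prod_{\ell=0}^{j-1}(1 - x/(n-\ell))$ used for $j \in \{1,2\}$ is elementary.
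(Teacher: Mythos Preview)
Your proposal is correct and follows essentially the same approach as the paper: factor over independent hyperedges, condition on $\abs{V_k}=x$, and compute the conditional avoidance probabilities $1-x/n$ and $(1-x/n)(1-x/(n-1))$ for $x\ge 2$, with weight~$1$ for $x\le 1$. The only cosmetic difference is that you display the binomial ratios $\binom{n-1}{x}/\binom{n}{x}$ and $\binom{n-2}{x}/\binom{n}{x}$ explicitly, whereas the paper states the resulting probabilities directly.
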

\begin{proof}
We may write $H^*_{nmF}$ as a union $\cup_{k=1}^m H^*_k$ where $H^*_k$ is a hypergraph on $\{1,\dots,n\}$ having $V_k$ as the only hyperedge. We note that a node $i$ is isolated in $H^*_{nmF}$ if and only if it isolated in $H^*_k$ for all $k$.  Because $H^*_1,\dots,H^*_m$ are mutually independent, we see that $P_1 = \prod_{k=1}^m \E \phi_1(X_k)$ where $X_k = \abs{V_k}$ and $\phi_1(x)$ is the conditional probability of node~$i$ being isolated in $H^*_k$ given that $\abs{V_k} = x$. Then $\phi_1(x) = 1 - x/n$ for $x \ge 2$ and $\phi_1(x)=1$ otherwise.

Similarly, the probability that any distinct nodes $i$ and $j$ both are isolated in $H^*_{nmF}$ equals $P_{2} = \prod_{k=1}^m \E \phi_2(X_k)$ where $\phi_2(x)$ is the conditional probability that $i$ and $j$ both are isolated in $H^*_k$ given that $\abs{V_k} = x$. Then $\phi_2(x) = 1$ for $x = 0,1$. For $x \ge 2$, nodes $i$ and $j$ both are isolated in $H^*_k$ if and only if neither of these nodes is contained in $V_k$, and given $\abs{V_k} = x$, this occurs with probability $(1 - \frac{x}{n}) (1 - \frac{x}{n-1})$.
\end{proof}

\begin{lemma}
\label{the:PairIsolationRatioRIG}
The pair isolation probability ratio $P_2/P_1^2$ is bounded by
\[
 \log \frac{P_2}{P_1^2}
 \wle \sum_{k=1}^m \frac{\Var(\phi_1(X_k))}{(\E \phi_1(X_k))^2}.
\]
where $X_k$ is distributed according to $f^{(k)}$ and $\phi_1$ is defined by \eqref{eq:phi1}.
\end{lemma}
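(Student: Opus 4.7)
The plan is to reduce the statement to a pointwise comparison between the functions $\phi_1$ and $\phi_2$ and then to a standard one-line logarithm estimate.

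First I would invoke Lemma~\ref{the:IsolationProbabilityRIG} to write
\[
 \frac{P_2}{P_1^2} \weq \prod_{k=1}^m \frac{\E \phi_2(X_k)}{(\E \phi_1(X_k))^2},
\]
so that taking logarithms turns the claim into a sum $\sum_k \log \frac{\E \phi_2(X_k)}{(\E \phi_1(X_k))^2}$, and a bound on each individual term suffices.

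Next I would establish the pointwise inequality $\phi_2(x) \le \phi_1(x)^2$ for all $x \in \{0,1,\dots,n\}$. For $x \le 1$ both sides equal $1$, while for $x \ge 2$ the claim reduces to $(1-x/n)(1-x/(n-1)) \le (1-x/n)^2$, which follows from $x/(n-1) \ge x/n$. Taking expectations yields $\E \phi_2(X_k) \le \E[\phi_1(X_k)^2]$, and therefore
\[
 \frac{\E \phi_2(X_k)}{(\E \phi_1(X_k))^2} - 1
 \wle \frac{\E[\phi_1(X_k)^2] - (\E \phi_1(X_k))^2}{(\E \phi_1(X_k))^2}
 \weq \frac{\Var(\phi_1(X_k))}{(\E \phi_1(X_k))^2}.
\]

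Finally, I would apply the elementary bound $\log(1+t) \le t$, valid for $t > -1$, to each term, noting that the ratios $\E \phi_2(X_k)/(\E \phi_1(X_k))^2$ are strictly positive since $\phi_1 \ge 1 - 1 = 0$ but in fact the relevant values $\E \phi_1(X_k)$ are positive (otherwise $P_1=0$ and the statement is vacuous, which is the one edge case I would briefly dispose of first). Summing over $k$ then gives the claimed bound. The main obstacle — really the only non-routine step — is spotting the pointwise inequality $\phi_2 \le \phi_1^2$; once that is in hand the remainder is a direct computation.
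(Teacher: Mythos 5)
Your proof is correct. It is essentially the paper's argument, but with some intermediate scaffolding removed. The paper's proof introduces indicator variables $I_{ik}$ for node $i$ being isolated in layer $k$, applies $\log t \le t-1$ to reduce each term to a covariance $\Cov(I_{1k},I_{2k})/(\E I_{1k})^2$, and then invokes the law of total covariance (conditioning on $X_k$) to decompose this covariance as $\Var(\phi_1(X_k)) + \E\bigl(\phi_2(X_k)-\phi_1(X_k)^2\bigr)$, finally observing that the second term is nonpositive. You bypass the indicator variables and the conditional-covariance decomposition altogether: you read the product formulas for $P_1$ and $P_2$ directly from Lemma~\ref{the:IsolationProbabilityRIG}, observe the pointwise inequality $\phi_2 \le \phi_1^2$ up front, and then apply $\log(1+t)\le t$ once. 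Since $\E\phi_2(X_k) - (\E\phi_1(X_k))^2$ \emph{is} exactly $\Cov(I_{1k},I_{2k})$, the two derivations traverse the same algebraic identity; yours simply arrives at it without the probabilistic detour. The key insight — spotting that $\phi_2 \le \phi_1^2$ pointwise, which converts a second-moment bound into a variance — is the same in both. One small presentational note: your justification ``follows from $x/(n-1)\ge x/n$'' implicitly uses that the common factor $1-x/n$ is nonnegative (true since $x \le n$); in the one remaining edge case $x=n$ both sides vanish, so the inequality holds there too, but it is worth a half-sentence to close that off.
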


\begin{proof}
Denote by $I_i$ (resp.\ $I_{ik}$) the indicator variable of the event that node $i$ is isolated in $H^*_{nmF}$ (resp.\ layer~$H^*_k$).  Because $I_i = \prod_{k=1}^m I_{ik}$ and the sets $V_k$ are mutually independent, it follows that
$P_1 = \E I_1 = \prod_{k=1}^m \E I_{1k}$
and 
$P_2 = \E(I_1 I_2) = \prod_{k=1}^m \E I_{1k} I_{2k}$.
Then
\[
 \frac{P_2}{P_1^2}
 \weq \prod_{k=1}^m \frac{\E I_{1k} I_{2k}}{(\E I_{1k})^2},
\]
so that
\begin{equation}
 \label{eq:LogPairRatioBound1}
 \log \frac{P_2}{P_1^2}
 \weq \sum_{k=1}^m \log \frac{\E I_{1k}I_{2k}}{(\E I_{1k})^2}
 \wle \sum_{k=1}^m \left( \frac{\E I_{1k}I_{2k}}{(\E I_{1k})^2} - 1 \right)
 \weq \sum_{k=1}^m \frac{\Cov( I_{1k}, I_{2k})}{(\E I_{1k})^2}.
\end{equation}

To analyse the covariance term in \eqref{eq:LogPairRatioBound1}, denote $X_k = \abs{V_k}$.  Then by conditioning on $X_k$, we find that
\[
 \Cov( I_{1k}, I_{2k})
 \weq \Cov( \E(I_{1k} | X_k), \E(I_{2k} | X_k ) ) + \E \Cov( I_{1k}, I_{2k} | X_k ).
\]
We also note that
$\E(I_{1k} | X_k ) = \E(I_{2k} | X_k ) = \phi_1(X_k)$, so that
\[
 \Cov( \E(I_{11} | X_k ), \E(I_{21} | X_k ) )
 \weq \Var(\phi_1(X_k)).
\]
Furthermore,
\[
 \Cov( I_{1k}, I_{2k} | X_k)
 \weq \phi_2(X_k) - \phi_1^2(X_k).
\]
We conclude that
\[
 \Cov( I_{1k}, I_{2k})
 \weq \Var(\phi_1(X_k)) + \E( \phi_2(X_k) - \phi_1^2(X_k)),
\]
where $\phi_1$ and $\phi_2$ are given by \eqref{eq:phi1} and \eqref{eq:phi2}. For $x < 2$ we see that $\phi_2(x) - \phi_1^2(x) = 0$. For $x \ge 2$, we see that
\[
 \phi_2(x) - \phi_1^2(x)
 \weq \Big(1- \frac{x}{n} \Big) \Big(1- \frac{x}{n-1} \Big) - \Big(1- \frac{x}{n} \Big)^2
 \weq - \frac{1}{n-1} \Big(1- \frac{x}{n} \Big) \frac{x}{n}.
\]
Therefore, $\phi_2(x) - \phi_1^2(x) \le 0$ for all $x$. 
Hence $\Cov( I_{1k}, I_{2k}) \le \Var(\phi_1(X_k))$.  By substituting this bound into \eqref{eq:LogPairRatioBound1}, and noting that $\E I_{1k} = \E \phi_1(X_k)$, we find that
\[
 \log \frac{P_2}{P_1^2}
 \wle \sum_{k=1}^m \frac{\Var(\phi_1(X_k))}{(\E \phi_1(X_k))^2}.
\]
\end{proof}

\updated{
\subsection{Proofs of Propositions \ref{the:MeanIsolated}--\ref{the:YesIsolatedRIG}}
\label{sec:ProofsIsolatedRIG}
}

\begin{proof}[Proof of Proposition \ref{the:MeanIsolated}]
The expected number of isolated nodes equals $n P_1$ where $P_1$ is the probability that any particular node is isolated in $H^*_{nmF}$.  The claim follows after noting that by Lemma~\ref{the:IsolationProbabilityRIG},
$
 P_1
 = \prod_{k=1}^m \sum_{x=0}^n \phi_1(x) f^{(k)}(x) 
 = \prod_{k=1}^m \left(1 - \frac{1}{n} \sum_{x=2}^n x f^{(k)}(x) \right),
$
where $\phi_1$ is defined by \eqref{eq:phi1}.
\end{proof}

\begin{proof}[Proof of Proposition \ref{the:ExpectedIsolatedNewer}]
The inequality $\log(1-t) \le -t$ implies that
\updated{
\begin{align*}
 \lambda - \log n
 \weq \sum_{k=1}^m \log\left( 1 - \frac{1}{n} \sum_{x=2}^n x f^{(k)}(x) \right) 
 \wle - \sum_{k=1}^m \frac{1}{n} \sum_{x=2}^n x f^{(k)}(x).
\end{align*}
By noting that the right side of the above inequality equals $-\frac{m}{n} (F_1)$ with
$(F)_1$ defined by \eqref{eq:FMoment},
we conclude that 
$\lambda \le \mu$ for $\mu = \log n - \frac{m}{n} (F)_1$.
}

Assume next that $(F)_2 \lesim (F)_0$.
Because $(F)_1 \ge 2 (F)_0$ and $(F)_2 \lesim (F)_0$, it follows that $(F)_2 \lesim (F)_1$.
Denote $a = \max_k \tx_k$ where $\tx_k = (f^{(k)})_1$, and observe that
$(f^{(k)})_1^2 \le (f^{(k)})_2$. Therefore,
$
 a^2
 \le \sum_{k=1}^m (f^{(k)})_2
 = m (F)_2.
$
The assumption $(F)_1 \lesim m^{-1} n \log n $ now implies that
$a \lesim (n \log n)^{1/2}$.
Especially, $\max_k \tx_k/n \to 0$.
We may then apply the bound $\log(1-t) \ge -\frac{t}{1-t}$, $0 \le t < 1$, to notice that
\[ 
 \lambda
 \weq \log n + \sum_{k=1}^m \log ( 1 - \tx_k/n )
 \wge \log n - \sum_{k=1}^m \frac{\tx_k/n}{1 - \tx_k/n}.
\]
By writing $\mu = \log n - \sum_{k=1}^m \tx_k/n$, we find that
\begin{align*}
 \mu - \lambda
 \wle \sum_{k=1}^m \frac{\tx_k/n}{1 - \tx_k/n} - \sum_{k=1}^m \tx_k/n
 \weq \frac{1}{n^2} \sum_{k=1}^m \frac{\tx_k^2}{1 - \tx_k/n}.
\end{align*}
Because $\tx_k^2 \le (f^{(k)})_2$, it follows that
\begin{align*}
 \mu - \lambda
 \wle \frac{1}{n^2} \sum_{k=1}^m \frac{(f^{(k)})_2}{1-a/n} 
 \weq \frac{m}{n^2} \frac{(F)_2}{1-a/n}
 \wasymp \frac{m}{n^2} (F)_2
 \wlesim \frac{m}{n^2} (F)_1
 \wlesim \frac{\log n}{n}.
\end{align*}
\end{proof}

\begin{proof}[Proof of Proposition~\ref{the:YesIsolatedRIG}]
\updated{ The expected number of isolated nodes equals} $\E I = n P_1$ where $P_1$ is the probability that any particular node is isolated in $G$.  
Lemma~\ref{the:IsolationProbabilityRIG} implies that
$P_1 = \prod_{k=1}^m (\sum_{x=0}^n \phi_1(x) f^{(k)}(x))$ with $\phi_1$ defined by \eqref{eq:phi1}.
It follows that $\E I = e^\lambda$. Markov's inequality $\pr(I \ge 1) \le \E I$ then implies that $\pr(I \ne 0) \le e^\lambda$, and the claim follows.

Next, Chebyshev's inequality implies that
$\pr( I = 0 ) \le \pr( \abs{I - \E I} \ge \E I) \le \frac{\Var(I)}{(\E I)^2}$.
Observe that $\E I = n P_1$ and $\E I^2 = n P_1 + n(n-1) P_2$ where $P_1$ and $P_2$
are given by Lemma~\ref{the:IsolationProbabilityRIG}.
Hence
$
 \Var(I)
 = n P_1 + n(n-1) P_2 - n^2 P_1^2,
$
and we conclude that
\[
 \pr(I = 0)
 \wle \frac{1}{\E I} + (1-1/n) \frac{P_{2}}{P_1^2} - 1
 \wle \frac{1}{\E I} + \frac{P_{2}}{P_1^2} - 1.
\]
By Lemma~\ref{the:PairIsolationRatioRIG}, 
$ \log \frac{P_2}{P_1^2} \le \sum_{k=1}^m \frac{\Var(\phi_1(X_k))}{(\E \phi_1(X_k))^2}$,
and it follows that
\[
 \pr(I = 0)
 \wle \frac{1}{\E I} + \exp\left( \sum_{k=1}^m \frac{\Var(\phi_1(X_k))}{(\E \phi_1(X_k))^2} \right) - 1.
\]
\end{proof}

\section{Proving connectivity}
\label{sec:Connectivity}
\subsection{Upper bounds on cut probabilities}

Fix integers $1 \le r \le n/2$ and $1 \le x \le n$.
Denote by $q_r(x)$ the probability that a set of size $x$ sampled uniformly at random from $[n]$ is either fully contained in $[r]$ or fully contained in $[n] \setminus [r]$.

\begin{lemma}
\label{the:CutBoundDet}
Fix an integer $1 \le r \le n/2$. For any $2 \le x \le n$,
\begin{equation}
 \label{eq:ShotGunBound}
 q_r(x)
 \wle \Big( 1 + \frac{r^x}{(n-r)^x} \Big) \Big( 1 - \frac{x}{n} \Big)^r,
\end{equation}
and
\begin{equation}
 \label{eq:PairBound}
 q_r(x)
 \wle 1 - 2 \frac{r}{n} \Big( 1 - \frac{r}{n} \Big).
\end{equation}
Furthermore, for any random variable $X$ with values in $\{0,\dots,n\}$,
\begin{equation}
 \label{eq:ManyIsolatedBoundNew1}
 \E q_r(X)
 \wle 1 - \frac{r}{n} \E X 1(X \ge 2)
 + \frac{r^2}{(n-r)^2} \pr(X \ge 2)
 + \frac12 \frac{r^2}{n^2} \E X^2 1(X \ge 2)
\end{equation}
and
\begin{equation}
 \label{eq:ManyIsolatedBoundNew2}
 \E q_r(X)
 \wle \pr(X < 2) + e^{- 2 \frac{r}{n}( 1 - \frac{r}{n}) } \pr(X \ge 2).
\end{equation}

\end{lemma}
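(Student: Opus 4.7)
My plan is to reduce everything to two elementary product expansions of binomial ratios, and then feed the resulting deterministic bounds into the expectation via simple Taylor estimates. For $x \ge 1$ the two events in the definition of $q_r(x)$ are disjoint, so $q_r(x) = \binom{r}{x}/\binom{n}{x} + \binom{n-r}{x}/\binom{n}{x}$. The key deterministic estimate is $\binom{r}{x} \le (r/(n-r))^x \binom{n-r}{x}$, which I would obtain by writing the ratio as $\prod_{i=0}^{x-1}(r-i)/(n-r-i)$ and verifying that each factor is at most $r/(n-r)$; the check $(r-i)(n-r) \le r(n-r-i)$ reduces to $2ri \le in$, which holds precisely because $r \le n/2$. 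Paired with the companion bound $\binom{n-r}{x}/\binom{n}{x} = \prod_{i=0}^{r-1}(n-x-i)/(n-i) \le (1-x/n)^r$ (each factor at most $(n-x)/n$ since $i \ge 0$), this yields \eqref{eq:ShotGunBound}.

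For \eqref{eq:PairBound}, I would first show that $q_r$ is nonincreasing on $\{x \ge 1\}$: the forward recursion $\binom{r}{x+1}/\binom{n}{x+1} = \tfrac{r-x}{n-x}\binom{r}{x}/\binom{n}{x}$ (and likewise with $n-r$ in place of $r$) multiplies by a factor in $[0,1]$, so both summands of $q_r$ are nonincreasing. Hence for $x \ge 2$,
\[
 q_r(x) \wle q_r(2) \weq 1 - \frac{2r(n-r)}{n(n-1)} \wle 1 - \frac{2r(n-r)}{n^2},
\]
which is exactly the claimed bound.

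For the expectation inequalities, I would split $\E q_r(X) = \E q_r(X) 1(X<2) + \E q_r(X) 1(X \ge 2)$ and control the first summand by $\pr(X<2)$ using $q_r \le 1$. For \eqref{eq:ManyIsolatedBoundNew1}, I would substitute \eqref{eq:ShotGunBound} on $\{X \ge 2\}$, use $(r/(n-r))^x \le (r/(n-r))^2$ (valid since $r/(n-r) \le 1$ and $x \ge 2$), and bound $(1-x/n)^r \le e^{-rx/n} \le 1 - rx/n + r^2 x^2/(2n^2)$ via the elementary inequality $e^{-y} \le 1 - y + y^2/2$ for $y \ge 0$ (proved by twice differentiating $1-y+y^2/2-e^{-y}$). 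For \eqref{eq:ManyIsolatedBoundNew2}, I would instead use \eqref{eq:PairBound} together with $1 - t \le e^{-t}$. The main point of care is the direction of inequalities in the two factor-by-factor arguments — in particular the crucial use of $r \le n/2$ in the first product bound, which is easy to invert accidentally — while the Taylor step is routine convexity.
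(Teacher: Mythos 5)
Your derivations of \eqref{eq:ShotGunBound}, \eqref{eq:PairBound} and \eqref{eq:ManyIsolatedBoundNew2} are correct and essentially identical to the paper's argument; you re-prove the factor-by-factor binomial ratio bounds directly (the paper invokes them as a separate shotgun lemma) and you explicitly verify the monotonicity of $q_r$ via the forward recursion where the paper merely asserts it, but the content is the same.

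Your plan for \eqref{eq:ManyIsolatedBoundNew1}, however, has a gap. After bounding each factor separately you are left, on $\{x\ge 2\}$, with
\[
 q_r(x) \;\le\; \Bigl(1+\tfrac{r^2}{(n-r)^2}\Bigr)\Bigl(1 - \tfrac{rx}{n} + \tfrac{r^2x^2}{2n^2}\Bigr)
 \;=\; (1+c)(1+b),
 \qquad c=\tfrac{r^2}{(n-r)^2}\ge 0,\quad b = -\tfrac{rx}{n}+\tfrac{r^2x^2}{2n^2}.
\]
To reach the target $1+c+b$ you must discard the cross term $cb$, and since $c\ge 0$ this requires $b\le 0$, i.e.\ $rx\le 2n$. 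That condition can fail (e.g.\ $r=\lfloor n/2\rfloor$, $x$ of order $n$), and in that regime $(1+c)(1+b) > 1+c+b$, so the chain of inequalities does not close. The paper sidesteps this by first combining both factors into a single exponential, $q_r(x) \le \exp\bigl((\tfrac{r}{n-r})^x - \tfrac{rx}{n}\bigr) = e^{-Z_r}$, then showing $Z_r\ge 0$ on $\{x\ge 2\}$ (this is where $r\le n/2$ is used) and applying $e^{-t}\le 1-t+\tfrac12 t^2$ once, together with $0\le Z_r\le \tfrac{rx}{n}$ to bound $Z_r^2$; packaging the two factors \emph{before} Taylor-expanding avoids the problematic cross term. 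Your route can be repaired by observing that when $rx\ge 2n$ the right-hand side of the pointwise version of \eqref{eq:ManyIsolatedBoundNew1} already exceeds $1\ge q_r(x)$, so the inequality is vacuous there; but as written your argument omits this case split.
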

\begin{proof}
Fix an integer $1 \le r \le n/2$.
A simple counting argument shows that
\[
 q_r(x)
 \weq 
 \begin{cases}
  1, &\quad x \le 1, \\
  \frac{\binom{r}{x}}{\binom{n}{x}} + \frac{\binom{n-r}{x}}{\binom{n}{x}}, &\quad 1 < x \le r, \\
  \frac{\binom{n-r}{x}}{\binom{n}{x}}, &\quad r < x \le n-r, \\
  0, &\quad x > n-r.
 \end{cases}
\]
By applying a shotgun lemma (Lemma~\ref{the:Shotgun}), we see that
${ \binom{n-r}{x} } / { \binom{n}{x} } \le (1-x/n)^r$ for all
$x \le n-r$, and
${ \binom{r}{x} } / { \binom{n-r}{x} } \le ( 1 - \frac{n-2r}{n-r})^x = (\frac{r}{n-r})^x$ for all
$x \le n$. As a consequence,
\begin{align*}
 \frac{\binom{r}{x}}{\binom{n}{x}} + \frac{\binom{n-r}{x}}{\binom{n}{x}}
 \wle \Big( 1 + \frac{r^x}{(n-r)^x} \Big) \Big( 1 - \frac{x}{n} \Big)^r
\end{align*}
for all $2 \le x \le r$.  Hence \eqref{eq:ShotGunBound} is valid for all $2 \le x \le r$, and also for all $r < x \le n-r$. Of course, \eqref{eq:ShotGunBound} is trivially true for $x > n-r$ as well.

To verify the second bound, we note that $x \mapsto q_r(x)$ is nonincreasing for $x \ge 2$, so that
$q_r(x) \le q_r(2)$ for all $x \ge 2$.  A direct counting argument shows that $q_r(2) = 1 - \frac{r(n-r)}{\binom{n}{2}}$. Therefore,
\[
 q_r(x)
 \wle 1 - \frac{r(n-r)}{\binom{n}{2}}
 \wle 1 - 2 \frac{r(n-r)}{n^2},
\]
and this confirms \eqref{eq:PairBound}.

Next, by \eqref{eq:ShotGunBound} we find that for all $2 \le x \le n$,
\begin{align*}
 q_r(x)
 \wle \left( 1 + \Big(\frac{r}{n-r} \Big)^x \right) (1-x/n)^r
 \wle \exp \left( \Big(\frac{r}{n-r} \Big)^x - \frac{rx}{n} \right).
\end{align*}
We find that
\begin{equation}
 \label{eq:QBoundRandom1}
 \E q_r(X)
 \wle \pr(X<2) + \E e^{-Z_r} 1(X \ge 2),
\end{equation}
where $Z_r = \frac{r}{n} X - (\frac{r}{n-r})^X$.  Because $r \mapsto \frac{r}{(n-r)^2}$ is increasing on $[1, n/2]$, we see that on the event $X \ge 2$, $Z_r \ge 2 \frac{r}{n} - (\frac{r}{n-r})^2 = \frac{r}{n} ( 2 - \frac{n r}{(n-r)^2} )
\ge \frac{r}{n} ( 2 - \frac{n^2/2}{(n/2)^2} )
= 0$. Hence $Z_r \ge 0$ on the event $X \ge 2$.
Because $e^{-t} \le 1 - t + \frac12 t^2$ for all $t \ge 0$, it follows that 
\[
 \E e^{-Z_r} 1(X \ge 2)
 \wle \E \left( 1 - Z_r + \frac12 Z_r^2 \right) 1(X \ge 2),
\]
and hence by \eqref{eq:QBoundRandom1},
\begin{align*}
 \E q_r(X)
 &\wle 1 - \E Z_r 1(X \ge 2) + \frac12 \E Z_r^2 1(X \ge 2) \\
 &\weq 1 - \frac{r}{n} \E X 1(X \ge 2) + \E \Big(\frac{r}{n-r} \Big)^X 1(X \ge 2) + \frac12 \E Z_r^2 1(X \ge 2) \\
 &\wle 1 - \frac{r}{n} \E X 1(X \ge 2) + \Big(\frac{r}{n-r} \Big)^2 \pr(X \ge 2) + \frac12 \frac{r^2}{n^2} \E X^2 1(X \ge 2).
\end{align*}
This confirms \eqref{eq:ManyIsolatedBoundNew1}.

To verify \eqref{eq:ManyIsolatedBoundNew2}, we note that \eqref{eq:PairBound} implies that
$
 q_r(x)
 \le 1 - 2 \frac{r}{n} \Big( 1 - \frac{r}{n} \Big)
 \le e^{- 2 \frac{r}{n}( 1 - \frac{r}{n}) }.
$
Therefore,
\[
 \E q_r(X)
 \weq \pr(X < 2) + \E q_r(X) 1(X \ge 2)
 \wle \pr(X < 2) + e^{- 2 \frac{r}{n}( 1 - \frac{r}{n}) } \pr(X \ge 2).
\]
\end{proof}

\subsection{Connectivity for constant layer sizes}

\begin{proposition}
\label{the:ConnectivityConstantSizes}
Fix integers $2 \le d \le n$ and $m \ge 1$.  Then
\[
 \pr(\text{$H^*_{nmd}$ is disconnected})
 \wle e^\lambda + \sum_{r=1}^\infty e^{(\lambda+5)r} + (2/e)^n,
\]
where $\lambda = \log n + m \log(1-d/n)$.
\end{proposition}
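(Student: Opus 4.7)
The plan is to apply a union bound over all non-trivial subsets $S \subsetneq [n]$: a hypergraph is disconnected if and only if some partition $\{S, S^c\}$ has all $m$ hyperedges lying entirely in $S$ or entirely in $S^c$, an event of probability $q_{|S|}(d)^m$ by independence of the hyperedges. Using the symmetry $q_r = q_{n-r}$ to count each partition at most once, this yields
\[
 \pr(\text{$H^*_{nmd}$ disconnected}) \wle \sum_{r=1}^{\lfloor n/2 \rfloor} \binom{n}{r} q_r(d)^m.
\]
I would first peel off $r=1$ by direct computation: since $d \ge 2$ the hyperedge cannot lie inside a singleton, so $q_1(d) = 1 - d/n$ and the $r=1$ term equals $n(1-d/n)^m = e^{\lambda}$ exactly.

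For $r \ge 2$ I would dichotomise according to the tightness of \eqref{eq:ShotGunBound}. Call $r$ \emph{tame} if $m(r/(n-r))^d \le 4r$ and \emph{wild} otherwise. For tame $r$, substituting \eqref{eq:ShotGunBound}, applying $\log(1+t) \le t$ and $\log \binom{n}{r} \le r\log(en/r)$, and using $m\log(1-d/n) = \lambda - \log n$ should yield
\[
 \log\bigl[\binom{n}{r} q_r(d)^m\bigr]
 \wle r(1 - \log r + \lambda) + m(r/(n-r))^d
 \wle r(5 - \log r + \lambda)
 \wle r(\lambda + 5),
\]
where the last step uses $r \ge 1$. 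The tame contributions therefore sum against the geometric series $\sum_{r=1}^{\infty} e^{(\lambda+5)r}$. For wild $r$, I would invoke \eqref{eq:PairBound} together with $1-t \le e^{-t}$ to obtain $q_r(d)^m \le \exp(-2mr(n-r)/n^2)$, and bound the binomial crudely by $\binom{n}{r} \le 2^n$.

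The main obstacle is showing that the wild condition forces $2mr(n-r)/n^2 \ge n$, so that $q_r(d)^m \le e^{-n}$. Rearranging the wild condition as $m > 4(n-r)^d/r^{d-1}$ and combining with $d \ge 2$ and $r \le n/2$, I expect the chain
\[
 \frac{2mr(n-r)}{n^2}
 \wges \frac{8(n-r)^{d+1}}{r^{d-2} n^2}
 \weq \frac{8(n-r)^3}{n^2}\left(\frac{n-r}{r}\right)^{d-2}
 \wge \frac{8(n-r)^3}{n^2}
 \wge n,
\]
where $((n-r)/r)^{d-2} \ge 1$ uses both $d \ge 2$ and $r \le n/2$, and $(n-r)^3 \ge n^3/8$ uses $r \le n/2$. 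The wild terms then contribute at most $e^{-n}\sum_{r=0}^{n}\binom{n}{r} = (2/e)^n$ in total. Summing the $r=1$ exact term, the tame geometric series, and the wild aggregate produces the claimed bound; the only delicate step is the chain above, which breaks when $d<2$, explaining why the hypothesis $d \ge 2$ is essential.
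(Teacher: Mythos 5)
Your proof is correct, and it reaches the stated bound by the same two workhorse estimates from Lemma~\ref{the:CutBoundDet} — the shotgun bound~\eqref{eq:ShotGunBound} for small cuts and the pair bound~\eqref{eq:PairBound} for large ones — but the bookkeeping is organised differently from the paper's. The paper first isolates the event $\cI$ that some node is isolated, bounds $\pr(\cI)\le e^\lambda$ via Proposition~\ref{the:YesIsolatedRIG}, and then observes that on $\cD\setminus\cI$ every cut has size at least $d$, so it only needs to sum over $d\le r\le n/2$; you instead sum over all $1\le r\le\lfloor n/2\rfloor$, extract the $r=1$ term exactly as $n(1-d/n)^m=e^\lambda$ (which is really the same quantity, just reached by a one-line computation rather than by invoking the isolated-node proposition), and carry $2\le r<d$ along in the main sum. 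The second difference is the split: the paper divides at the $d$-independent threshold $n_0=(m^{-1}n^2)\wedge(n/2)$, whereas you split on the data-driven tame/wild condition $m(r/(n-r))^d\lessgtr 4r$, i.e.\ on whether the extra factor in~\eqref{eq:ShotGunBound} is small relative to the geometric budget. Both splits deliver the same constants ($+5$ in the exponent, $(2/e)^n$ for the tail), and both crucially exploit $d\ge2$ and $r\le n/2$ to make the wild/large regime collapse to $e^{-n}$ per term; your version is marginally more self-contained (no appeal to Proposition~\ref{the:YesIsolatedRIG}) while the paper's version is more uniform in $d$ and dovetails with the treatment of the inhomogeneous case in Proposition~\ref{the:GeneralPassiveRIGConnectedNewGen}. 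I checked the tame chain ($r(1-\log r+\lambda)+m(r/(n-r))^d\le r(\lambda+5)$ for $r\ge1$) and the wild chain ($m(r/(n-r))^d>4r\Rightarrow 2mr(n-r)/n^2\ge n$, using $(n-r)/r\ge1$, $d-2\ge0$, and $(n-r)^3\ge n^3/8$); both are sound.
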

\begin{proof}
Denote by $\cD$ be the event that $H^*_{nmd}$ is disconnected, and by $\cI$ the event that $H^*_{nmd}$ contains isolated nodes. Recall that by Proposition~\ref{the:YesIsolatedRIG},
\begin{equation}
 \label{eq:Juhannus00}
 \pr(\cI)
 \wle e^\lambda.
\end{equation}
On the event $\cD \setminus \cI$ there exists a node set $R$ of size $d \le r \le n/2$ such\footnote{The existence of such node set also implies that $x \le n/2$.} that $(R,R^c)$ forms a cut in $H^*_{nmd}$, so that no hyperedge of $H^*_{nmd}$ contains nodes from both $R$ and $R^c$. By the union bound, we find that
\begin{equation}
 \label{eq:Juhannus0}
 \pr(\cD \setminus \cI)
 \wle \sum_{x \le r \le n/2} \binom{n}{r} q_r^m(d),
\end{equation}
where $q_r(d)$ denotes the probability that a set of size $d$ sampled uniformly at random from $[n]$ is fully contained in either $[r]$ or $[n]\setminus [r]$. By \eqref{eq:ShotGunBound} in Lemma \ref{the:CutBoundDet}, we find that 
\[
 q_r(d)
 \wle \left( 1 + \frac{r^2}{(n-r)^2}  \right) (1-d/n)^r
 \wle \exp\left( \frac{r^2}{(n-r)^2}  \right) (1-d/n)^r,
\]
and by noting that $(1-d/n)^r = (n^{-1} e^\lambda)^{r/m}$, we find that
\begin{equation}
 \label{eq:Juhannus1}
 q_r^m(d)
 \wle \left\{ n^{-1} \exp\left( \lambda + \frac{m r}{(n-r)^2} \right) \right\}^r
\end{equation}
for all $d \le r \le n/2$. In addition, \eqref{eq:PairBound} in Lemma \ref{the:CutBoundDet} implies that
$ q_r(d) \le 1 - 2 \frac{r}{n} ( 1 - \frac{r}{n} )$, so that 
\begin{equation}
 \label{eq:Juhannus2}
 q_r^m(d)
 \wle \exp\left( - \frac{2 m r(n-r)}{n^2} \right)
\end{equation}
for all $d \le r \le n/2$. 

Let us now split the index set $J = \{r: d \le r \le n/2\}$ in the sum in \eqref{eq:Juhannus0} into
$J_1 = \{r \in J: r \le n_0\}$ and $J_2 = \{r \in J: r > n_0\}$ using a scale-dependent threshold value $n_0 = (m^{-1}n^2) \wedge (n/2)$, and define
\[
 S_i \weq \sum_{r \in J_i} \binom{n}{r} q_r^m(d), \qquad i=1,2.
\]
By \eqref{eq:Juhannus1} and the inequality $\binom{n}{r} \le (\frac{e n}{r})^r$, we find that 
\begin{align*}
 S_1
 &\wle \sum_{r \in J_1} \left(\frac{e n}{r} \right)^r
   \left\{ n^{-1} \exp\left( \lambda + \frac{m r}{(n-r)^2} \right) \right\}^r  \\
 &\wle \sum_{r =1}^\infty \left\{ \exp\left( \lambda + \frac{m n_0}{(n-n_0)^2} +1 \right) \right\}^r.
\end{align*}
Our choice of $n_0$ now implies that $\frac{m n_0}{(n-n_0)^2} \le \frac{n^2}{(n-n/2)^2} = 4$. Hence,
\begin{equation}
 \label{eq:Juhannus3}
 S_1
 \wle \sum_{r =1}^\infty e^{(\lambda+5)r}.
\end{equation}

We may bound the sum $S_2$ by noting that 
\eqref{eq:Juhannus2} and the equality $\sum_{r=0}^{n} \binom{n}{r} = 2^n$ imply that
\begin{align*}
 S_2
 \wle \sum_{r \in J_2} \binom{n}{r} \exp\left( - \frac{2 m r(n-r)}{n^2} \right)
 \wle 2^n \exp\left( - \frac{2 m n_0(n-n_0)}{n^2} \right).
\end{align*}
Because $n-n_0 \ge n/2$, we conclude that 
\[
 S_2
 \wle \exp\left( n \log 2 - \frac{m n_0}{n} \right).
\]
We also note that $S_2=0$ for $m \le 2n$, because in this case $n_0=n/2$ and the index set $J_2$ is empty. For $m > 2n$ we see that $n_0 = m^{-1}n^2$, and the above inequality becomes
\[
 S_2
 \wle \exp\left( n \log 2 - n \right)
 \weq (2/e)^n.
\]
Therefore, $S_2 \le (2/e)^n$ in both cases.  By recalling that 
$\pr(\cI) \le e^\lambda$ due to \eqref{eq:Juhannus00}, noting that 
$\pr(\cD \setminus \cI) \le S_1+S_2$ due to \eqref{eq:Juhannus0}, and 
combining the latter inequality with \eqref{eq:Juhannus3}, we conclude that
\[
 \pr(\cD)
 \wle \pr(\cI) + S_1 + S_2
 \wle e^\lambda + \sum_{r =1}^\infty e^{(\lambda+5)r} + (2/e)^n.
\]
\end{proof}

\subsection{Connectivity for inhomogeneous layer sizes}

\begin{proposition}
\label{the:GeneralPassiveRIGConnectedNewGen}
If $(F)_2 \lesim (F)_0$ and $\mu = \log n - \frac{m}{n}(F)_1$ satisfies $\mu \to -\infty$,
then $H^*_{nmF}$ is connected whp.
\end{proposition}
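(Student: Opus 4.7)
The plan is to bound $\pr(H^*_{nmF}\text{ disconnected}) \le \pr(\cI) + \pr(\cB)$, where $\cI$ is the event that some node is isolated and $\cB$ is the event that there exists $R \subset [n]$ with $2 \le \abs{R} \le n/2$ such that no hyperedge crosses $(R,R^c)$. For $\cI$, Proposition~\ref{the:MeanIsolated} together with Markov's inequality gives $\pr(\cI) \le e^{\lambda}$, and then the unconditional bound $\lambda \le \mu$ from Proposition~\ref{the:ExpectedIsolatedNewer} combined with $\mu \to -\infty$ yields $\pr(\cI) \to 0$. The cut event is handled by the union bound and the independence of $V_1,\dots,V_m$:
\[
 \pr(\cB) \wle \sum_{r=2}^{\lfloor n/2 \rfloor} \binom{n}{r} \prod_{k=1}^m \E q_r(X_k),
\]
with $X_k = \abs{V_k}$ and $q_r$ as in Lemma~\ref{the:CutBoundDet}.

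The key a priori estimate I would extract from $(F)_2 \lesim (F)_0$ is that $(F)_1 \asymp (F)_0$: layerwise Cauchy--Schwarz gives $(f^{(k)})_1^2 \le (f^{(k)})_2 (f^{(k)})_0$, and a second Cauchy--Schwarz across $k$ yields $(F)_1^2 \le (F)_2 (F)_0 \lesim (F)_0^2$, which combined with the elementary $(F)_1 \ge 2 (F)_0$ gives $(F)_1 \asymp (F)_0$. Since $m(F)_1/n - \log n = -\mu \to \infty$, we have $m(F)_0 \gesim m(F)_1 \gesim n \log n$ eventually, and this bound is the quantitative engine of what follows.

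Next I split the cut sum at $n_0 = \lfloor n^2 / (m(F)_0) \rfloor \wedge \lfloor n/2 \rfloor$. For $2 \le r \le n_0$, applying \eqref{eq:ManyIsolatedBoundNew1} together with $u \le e^{u-1}$ and absorbing the $(F)_2$-term via regularity yields
\[
 \prod_{k=1}^m \E q_r(X_k) \wle \exp\!\Bigl( -\tfrac{r\, m(F)_1}{n} + \tfrac{C r^2 m(F)_0}{n^2} \Bigr).
\]
The choice of $n_0$ forces the correction term to be $\le C r$, and combined with $\binom{n}{r} \le (e n/r)^r$ and $m(F)_1/n = \log n - \mu$ this bounds the $r$th summand by $\exp(r(\mu + C' - \log r))$. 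Summing over $r \ge 2$ gives a geometric-type series dominated by $e^{2(\mu + C')}/(1 - e^{\mu + C'})$, which tends to $0$ as $\mu \to -\infty$.

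The main obstacle will be the range $n_0 < r \le n/2$, nonempty once $m(F)_0 > 2n$ (which holds eventually). There I would apply \eqref{eq:ManyIsolatedBoundNew2} together with $1 - e^{-x} \ge x/2$ for $x \in [0,1]$ to obtain $\prod_k \E q_r(X_k) \le \exp(-m(F)_0 \, r(n-r)/n^2)$, so that the $r$th summand is at most $\exp\!\bigl(r[\log(en/r) - m(F)_0 (n-r)/n^2]\bigr)$. The delicate verification is that the bracket is uniformly negative of large magnitude: $(n-r)/n \ge 1/2$ gives $m(F)_0 (n-r)/n^2 \ge m(F)_0/(2n) \gesim \log n$, while $\log(en/r) \le \log(e\, m(F)_0/n) = O(\log(m(F)_0/n))$ is of strictly lower order. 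Hence the bracket is $\le - m(F)_0/(4n)$ for large $n$, and geometric summation gives $\sum_{r > n_0} \binom{n}{r} \prod_k \E q_r(X_k) \lesim e^{-n_0 m(F)_0/(4 n)} = e^{-n/4}$, completing the argument.
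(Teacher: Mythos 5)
Your proposal is correct and follows essentially the same route as the paper: a union bound over cuts $\binom{n}{r}Q_r$ with $Q_r=\prod_k \E q_r(X_k)$, the two bounds \eqref{eq:ManyIsolatedBoundNew1} and \eqref{eq:ManyIsolatedBoundNew2} from Lemma~\ref{the:CutBoundDet}, and a split at $n_0\approx n^2/(m(F)_0)$ with the regularity $(F)_2\lesim(F)_0$ absorbing the second-moment correction. The only cosmetic differences are that the paper keeps $r=1$ inside the cut sum rather than invoking Proposition~\ref{the:MeanIsolated} separately, obtains $(F)_1\lesim(F)_0$ directly from $(F)_1\le\tfrac12(F)_2$ rather than via a double Cauchy--Schwarz, and bounds $S_2$ crudely by $2^n\cdot\max_{r\in J_2}Q_r$ instead of your term-by-term geometric estimate.
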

\begin{proof}
$H^*_{nmF}$ is disconnected if and only if there exists a node set $A$ of size $1 \le \abs{A} \le n/2$ such that $H^*_{nmF}$ contains no links connecting $A$ to its complement. For any node set with $\abs{A}=r$ nodes, the conditional probability of such event given the hyperedge sizes equals $\prod_{k=1}^m q_r(X_k)$ where $X_k=\abs{V_k}$ and $q_r(x)$ is the probability that a uniformly random $x$-subset of $[n]$ is either fully contained in $[r]$ or fully contained in $[n] \setminus [r]$. Because $X_1,\dots,X_m$ are mutually independent, the corresponding unconditional probability equals $Q_r = \prod_{k=1}^m \E q_r(X_k)$. The union bound hence implies that
\begin{equation}
 \label{eq:DisconnectedRIGDet}
 \pr(\text{$H^*_{nmF}$ disconnected})
 \wle \sum_{1 \le r \le n/2} \binom{n}{r} Q_r.
\end{equation}
We will fix a threshold level $n_0 = c^{-1} m^{-1}n^2$ with
$c = (F)_0$, and split the sum on the right as
\begin{equation}
 \label{eq:SplitGen}
 S
 \weq \underbrace{\sum_{j \in J_1} \binom{n}{r} Q_r}_{S_1}
 \ + \underbrace{\sum_{j \in J_2} \binom{n}{r} Q_r}_{S_2}.
\end{equation}
where
$J_1 = \{1 \le r \le n/2: r \le n_0\}$ and
$J_2 = \{1 \le r \le n/2: r > n_0\}$.

(i) Assume that $n_0 \ge 1$, so that $J_1$ is nonempty, and fix $r \in J_1$. The bound~\eqref{eq:ManyIsolatedBoundNew1} combined with the inequality $\log t \le t-1$ shows that
\[
 \log \E q_r(X_k)
 \wle - \frac{r}{n} \E X_k 1(X_k \ge 2)
 + \frac{r^2}{(n-r)^2} \pr(X_k \ge 2)
 + \frac12 \frac{r^2}{n^2} \E X_k^2 1(X_k \ge 2). 
\]
Recall the moments $(F)_r$ defined by \eqref{eq:FMoment}.
Let $Y$ be a random integer distributed according to probability measure $\frac{1}{m} \sum_{k=1}^m f_k$. By summing both sides of the above inequality,
we find that
\[
 \log Q_r
 \wle - \frac{mr}{n} (F)_1
 + \frac{m r^2}{(n-r)^2} (F)_0
 + \frac12 \frac{m r^2}{n^2} (F)_2.
\]
Because $(F)_2 \ge 4 (F)_0$ and  $(n-r)^2 \ge \frac14 n^2$, 
we see that
\begin{align*}
 \log Q_r
 \wle - \frac{mr}{n} (F)_1 + \frac32 \frac{mr^2}{n^2} (F)_2
 \wle r \left\{ - \frac{m}{n} (F)_1 + \frac32 \frac{m n_0}{n^2} (F)_2 \right\}.
\end{align*}
By applying the bound $\binom{n}{r} \le n^r$, we find that
$
 \binom{n}{r} Q_r
 \le e^{B r},
$
where
\begin{align*}
 B
 \weq \log n - \frac{m}{n} (F)_1 + \frac32 \frac{m n_0}{n^2} (F)_2
 \weq \mu + \frac32 \frac{(F)_2}{(F)_0}.
\end{align*}
Therefore, $S_1 \le \sum_{r=1}^\infty e^{B r}$. Because $\mu \to -\infty$ and $(F)_2 \lesim (F)_0$, we find that $S_1 = o(1)$.

(ii) For the terms of the sum $S_2$, we apply inequality \eqref{eq:ManyIsolatedBoundNew2} and the fact that $r \mapsto \frac{r}{n}( 1 - \frac{r}{n})$ is nondecreasing for $0 \le r \le n/2$, to conclude that
\[
 \E q_r(X_k)
 \wle 1 - c_k + c_k e^{- 2 \frac{r}{n}( 1 - \frac{r}{n}) }
 \wle 1 - c_k + c_k e^{-t}
\]
for all $r \in J_2$, where $c_k = \pr(X_k \ge 2)$ and $t = 2 \frac{n_0}{n}( 1 - \frac{n_0}{n})$.
Because $\log(1+s) \le s$ and $e^{ - t } \le 1 - t + \frac12 t^2$, we find that
$\log ( 1 - c_k + c_k e^{-t} ) \le c_k(e^{-t}-1) \le - c_k t (1 - t/2 ).$
Therefore,
\[
 \log Q_r
 \weq \sum_{k=1}^m \log \E q_r(X_k)
 \wle - \sum_{k=1}^m c_k t (1 - t/2)
 \weq - m c t (1 - t/2).
\]
By applying the equality $\sum_{r=0}^n \binom{n}{r} = 2^n$, it follows that
\begin{align*}
 S_2
 \wle 2^n e^{- m c t (1 - t/2)}
 \weq e^{n \log 2 - m c t (1 - t/2)}.
\end{align*}
Observe now that $(F)_1 \le \frac12 (F)_2$, so that 
$
 \mu
 \ge \log n - \frac{m}{2 n} (F)_2.
$
Because $\mu \to -\infty$ and 
$(F)_2 \lesim (F)_0$
we find that $c = (F)_0 \gg \frac{n}{m}$.
By recalling that $n_0 = c^{-1} m^{-1}n^2$, we see that $n_0 \ll n$. Therefore,
and $m c t (1 - t/2) \sim mc t \sim 2 m c \frac{n_0}{n} \sim 2n$, and we conclude that 
\begin{align*}
 S_2
 \wle e^{n \log 2 - m c t (1 - t/2)}
 \weq e^{n \log 2 - (2+o(1)) n}
 \weq o(1).
\end{align*}
Because $S_1=o(1)$ and $S_2=o(1)$, the claim follows from \eqref{eq:DisconnectedRIGDet}.
\end{proof}

\section{Proofs of main results}
\label{sec:Proofs}

\subsection{Shotgun random graphs}

\begin{proof}[Proof of Theorem~\ref{the:SRH}]

(i) Denote $\mu = \log n - \frac{m}{n} (F)_1$. Assume that $\mu \to -\infty$ and $(F)_2 \lesim (F)_0$.  Then Proposition~\ref{the:GeneralPassiveRIGConnectedNewGen} implies that $H^*_{nmF}$ is connected whp.

(ii) Assume that $\mu \to +\infty$ and $(F)_2 \lesim (F)_0$.  We note that $\mu \to +\infty$ implies $(F)_1 \lesim m^{-1} n \log n$, 
and Proposition~\ref{the:ExpectedIsolatedNewer} implies that $\lambda \ge \mu - O(\frac{\log n}{n})$. Hence $\lambda \to \infty$.
Furthermore, Proposition~\ref{the:YesIsolatedRIG} tells that
\begin{equation}
 \label{eq:FromYesIsolatedRIG}
 \pr( \text{$H^*_{nmF}$ contains no isolated nodes} )
 \wle e^{-\lambda} + \exp\left( \sum_{k=1}^m \frac{\Var(Z_k)}{(\E Z_k)^2} \right) - 1,
\end{equation}
where $Z_k = 1 - n^{-1} X_k 1(X_k \ge 2)$, and $X_k$ is an $f_k$-distributed random integer. 

We will show that $\sum_{k=1}^m \frac{\Var(Z_k)}{(\E Z_k)^2} \to 0$. 
Denote $a = \max_k (f^{(k)})_1$.
We saw in the proof of Proposition~\ref{the:ExpectedIsolatedNewer} that $(F)_2 \lesim (F)_1$ and $a \lesim ( m  (F)_1 )^{1/2} \lesim n^{1/2} \log^{1/2} n$.
As a consequence,
$\min_k \E Z_k = 1 - a/n = 1-o(1)$, and we conclude that
\[
 \sum_{k=1}^m \frac{\Var(Z_k)}{(\E Z_k)^2}
 \wasymp \sum_{k=1}^m \Var(Z_k).
\]

Next, observe that $\Var(Z_k) = n^{-2} \Var(X_k 1(X_k \ge 2)) \le n^{-2} \E X_k^2 1(X_k \ge 2)$,
so it follows that 
\[
 \sum_{k=1}^m \Var(Z_k)
 \wle m n^{-2} (F)_2
 \wlesim m n^{-2} (F)_1
 \wlesim \frac{\log n}{n}.
\]
We now conclude that
$
 \sum_{k=1}^m \frac{\Var(Z_k)}{(\E Z_k)^2}
 \lesim \frac{\log n}{n}
 \to 0.
$
Hence by \eqref{eq:FromYesIsolatedRIG}, together with $\lambda \to \infty$, we conclude $H^*_{nmF}$ contains isolated nodes and is disconnected whp.
\end{proof}

\begin{proof}[Proof of Theorem~\ref{the:SRHRegular}]
Fix integers $m=m_n$ and $d=d_n$ such that $m \ge 1$ and $2 \le d \le n$. Recall that $H^*_{nmd}$ is 
a special instance of the model $H^*_{nmF}$ for which $\lambda$ defined by \eqref{eq:MeanIsolated} reduces to $\lambda = \log n \updated{+} \log(1-d/n)$. Proposition~\ref{the:YesIsolatedRIG} then implies that 
\[
 \pr( \text{$H^*_{nmd}$ is connected} )
 \wle \pr( \text{$H^*_{nmd}$ contains no isolated nodes} )
 \wle e^{-\lambda},
\]
Hence $H^*_{nmd}$ is whp disconnected when $\lambda \to \infty$. Next,
Proposition~\ref{the:ConnectivityConstantSizes} tells that
\[
 \pr(\text{$H^*_{nmd}$ is disconnected})
 \wle e^\lambda + \sum_{r=1}^\infty e^{(\lambda+5)r} + (2/e)^n,
\]
so that $H^*_{nmd}$ is whp connected for $\lambda \to -\infty$.
\end{proof}

\subsection{Random hypergraphs with given hyperedge sizes}

\begin{proof}[Proof of Theorem~\ref{the:Hypergraph}]
Let $H_{nmf}$ be a random hypergraph sampled uniformly at random from the set $\cH_{nmf}$ of hypergraphs on node set $\{1,\dots,n\}$ having $m$ hyperedges and empirical hyperedge size distribution $f$.
Instead of directly analysing $H_{nmf}$, we will study a simpler model generated as follows:
\begin{enumerate}
\item Create a list $(x_1, \dots, x_m)$ by concatenating $mf(1)$ copies of integer~1, $mf(2)$ copies of integer~2, \dots, and $mf(n)$ copies of integer~$n$.
\item Sample random sets $V_1,\dots, V_m$ independently and uniformly at random from the collection of subsets of $[n]$ with sizes $x_1,\dots,x_m$, respectively.
\item Define a hypergraph $H^* = ([n], \{V_1,\dots, V_m\})$.
\end{enumerate}
We note that the distribution of $H^*$ is in general not the uniform distribution on $\cH_{nmf}$, because 
the collection $\{V_1,\dots, V_m\}$ may contain less than $m$ unique sets in case there are duplicates.
Instead, $H^* = H^*_{nmF}$ may be recognised as an instance of the shotgun random hypergraph model defined in Section~\ref{sec:SRH} where $F = \delta_{x_1} \times \cdots \times \delta_{x_m}$ with $\delta_x$ denoting the Dirac point mass at $x$.

Let us denote by $\cD$ the event that the sets $V_1,\dots,V_m$ are all distinct. On this event, $H^* \in \cH_{nmf}$.  We also see\footnote{$H^*=h$ if and only if for each $x$, the $x$-sized hyperedges of $H^*$ coincide with the $x$-sized hyperedges of $h$, and the latter event has probability $m_x! \binom{n}{x}^{-m_x}$ when $h$ has $m_x = mf(x)$ distinct hyperedges of size $x$.} that
\[
 \pr(H^* = h)
 \weq \prod_{x=1}^n (mf(x))! \binom{n}{x}^{-mf(x)}
 \qquad \text{for any $h \in \cH_{nmf}$}.
\]
The above formula shows that the probability mass function $h \mapsto \pr(H^* = h)$ is constant on $\cH_{nmf}$, and therefore the conditional probability distribution on $H^*$ given $\cD$ is the uniform distribution on $\cH_{nmf}$. Especially,
\[
 \pr(\text{$H_{nmf}$ is connected})
 \weq \pr(\text{$H^*_{nmF}$ is connected} \cond \cD).
\]
Furthermore, in this case the special moments $(F)_r$ defined in \eqref{eq:FMoment} coincide with the moments defined in \eqref{eq:fMoment} according to $(F)_r = (f)_r$. Because $\sum_{x=1}^n \binom{n}{x}^{-1} f(x)^2 \ll m^{-2}$, Lemma~\ref{the:Distinct} shows that $\pr( \cD ) \to 1$. The claims now follow by Theorem~\ref{the:SRH}.
\end{proof}

\begin{proof}[Proof of Theorem~\ref{the:HypergraphRegular}]

We will construct random hypergraph $H^*$ in the same way as in the proof of Theorem~\ref{the:Hypergraph}, this time defining the list $(x_1,\dots, x_m)$ simply by concatenating $m$ copies of integer $d$. Then we recognise $H^* = H^*_{nmd}$ as an instance of the shotgun random hypergraph model defined in Section~\ref{sec:SRH} where $F = \delta_{d} \times \cdots \times \delta_{d}$.
Because $m \ll \binom{n}{d}^{1/2}$, Lemma~\ref{the:Distinct} shows that $\pr( \cD ) \to 1$. 
The claims now follow by Theorem~\ref{the:SRHRegular}.
\end{proof}

\subsection{Random intersection graphs}

\begin{proof}[Proof of Theorem~\ref{the:RIG}]
Let $H^*_{nmF}$ be a shotgun random hypergraph with
$F = f \times \cdots \times f$ being the $m$-fold product measure of $f$.
Then the moments defined by \eqref{eq:fMoment} and \eqref{eq:FMoment} match according to
$(F)_r = (f)_r$. The statements of Theorem~\ref{the:RIG} then follow by applying Theorem~\ref{the:SRH} and noting that the graph $G_{nmf}$ is connected if and only if the hypergraph $H^*_{nmf}$ is connected.
\end{proof}

\begin{proof}[Proof of Theorem~\ref{the:RIGRegular}]
Let $H^*_{nmd}$ be a shotgun random hypergraph with
$F$ being the Dirac point mass at $(d,\dots, d) \in \{0,\dots,n\}^m$.  The statements of Theorem~\ref{the:RIGRegular} then follow by applying Theorem~\ref{the:SRHRegular} and noting that the graph $G_{nmd}$ is connected if and only if the hypergraph $H^*_{nmd}$ is connected.
\end{proof}

\appendix
\section{Elementary bounds}

\subsection{Sampling distinct random sets}

Given integers $n,m \ge 1$ and $1 \le x_1,\dots,x_m \le n$, let $V_1,\dots,V_m$ be mutually independent random sets  such that for each $k$, the set $V_k$ is sampled uniformly at random from the collection of all subsets of $\{1,\dots,n\}$ of size $x_k$. The following result (compare with \cite[Theorem 1.1]{Janson_2009}) shows that
\[
 \pr(\text{sets $V_1,\dots,V_m$ are distinct})
 \weq 1-o(1)
\]
if and only if $\sum_{x=1}^n \binom{n}{x}^{-1} \binom{m_x}{2} \ll 1$,
where $m_x$ denotes the number of sets of size~$x$.

\begin{lemma}
\label{the:Distinct}
Denote by $m_x$ the number of sets $V_k$ with size $1 \le x \le n$. The probability $p$ that the sets $V_1, \dots, V_m$ are distinct is nonzero if and only if $m_x \le \binom{n}{x}$ for all $x$. In this case $p$ is bounded by
$1-c \le p \le e^{-c}$ with
$
 c
 = \sum_{x=1}^n \binom{n}{x}^{-1} \binom{m_x}{2}.
$
\end{lemma}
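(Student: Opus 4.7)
The plan is to decompose by hyperedge size and reduce the problem to $n$ independent birthday-type calculations. For each $x \in \{1,\dots,n\}$, let $I_x = \{k: x_k = x\}$, so $\abs{I_x} = m_x$, and let $\cE_x$ denote the event that the sets $\{V_k: k \in I_x\}$ are mutually distinct. Since $V_k$ and $V_{k'}$ can only coincide when $\abs{V_k} = \abs{V_{k'}}$, the event that all $V_1,\dots,V_m$ are distinct equals $\bigcap_x \cE_x$. Because the sets corresponding to different sizes are sampled independently, I obtain $p = \prod_{x=1}^n p_x$ with $p_x = \pr(\cE_x)$. Conditioned on size $x$, the sets $\{V_k : k \in I_x\}$ are $m_x$ independent uniform samples from the collection $\binom{[n]}{x}$ of cardinality $N_x = \binom{n}{x}$, so that
\[
 p_x \weq \prod_{i=0}^{m_x-1} \Big( 1 - \frac{i}{N_x} \Big).
\]

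From this formula, the first claim follows immediately: $p_x = 0$ precisely when $m_x > N_x$ (equivalently, by pigeonhole, a collision among the $m_x$ uniform samples is forced); otherwise every factor is positive and $p_x > 0$. Hence $p > 0$ if and only if $m_x \le \binom{n}{x}$ for all $x$.

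For the upper bound, I would apply the inequality $1 - t \le e^{-t}$ factor by factor to obtain
\[
 p_x \wle \exp\Big( - \sum_{i=0}^{m_x-1} \frac{i}{N_x} \Big) \weq \exp\Big( - \frac{\binom{m_x}{2}}{\binom{n}{x}} \Big),
\]
and multiplying over $x$ yields $p \le e^{-c}$.

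For the lower bound, I would use a union bound over pairs: any two specific uniform samples of size $x$ coincide with probability $1/N_x$, so
\[
 1 - p_x \wle \binom{m_x}{2} \binom{n}{x}^{-1}.
\]
Combined with the elementary product inequality $\prod_x a_x \ge 1 - \sum_x (1-a_x)$ for $a_x \in [0,1]$ (a one-line induction), this gives $p \ge 1 - \sum_x \binom{m_x}{2}\binom{n}{x}^{-1} = 1 - c$. There is no real obstacle here: the only mild subtlety is that when $c \ge 1$ the lower bound is vacuous, and the union bound is then trivially valid since $1 - p_x \le 1$, so the argument goes through in all parameter ranges for which $p > 0$.
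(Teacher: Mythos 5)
Your proof is correct and follows essentially the same route as the paper: decompose by hyperedge size into independent birthday problems, use the product formula for $p_x$, bound $p_x \le e^{-\binom{m_x}{2}/\binom{n}{x}}$ via $1-t\le e^{-t}$, and bound $1-p_x$ by a union bound over pairs. The only cosmetic difference is that you deduce the lower bound from the product inequality $\prod_x a_x \ge 1-\sum_x(1-a_x)$ whereas the paper applies a union bound to $\pr(\cD^c)=\pr(\cup_x\cD_x^c)$ directly; you also spell out the nonzero-iff claim explicitly, which the paper leaves implicit in the product formula.
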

\begin{proof}
The event of interest can be written as $\cD = \cap_{x=1}^n \cD_x$, where
$\cD_x$ is the event that all sets of size $x$ are distinct.  The event $\cD_x$ corresponds to the classical birthday problem (e.g.\ \cite{Mitzenmacher_Upfal_2005} for which we know that
$
 \pr(\cD_x)
 = \prod_{k=1}^{m_x-1} (1 - {k}/{\binom{n}{x}} ).
$
The inequality $1-t \le e^{-t}$ then implies that
\[
 \pr(\cD_x)
 \wle \exp\left( - \sum_{k=1}^{m_x-1} {k}/{\binom{n}{x}} \right)
 \weq \exp\left( - \binom{n}{x}^{-1} \binom{m_x}{2} \right).
\]
Because $\cD_1,\dots,\cD_n$ are independent, it follows that $\pr(\cD) = \prod_{x=1}^n \pr(\cD_x) \le e^{-c}$.

Observe next that when $\cD_x$ fails, then there exists a pair of sets of size $x$ which coincide with each other.  Because any particular pair of sets of size $x$ coincides with probability $\binom{n}{x}^{-1}$, union bound implies that $1-\pr(\cD_x) \le \binom{n}{x}^{-1}\binom{m_x}{2}$. Another union bound then shows that
$1-\pr(\cD) \le \sum_{x=1}^n ( 1-\pr(\cD_x)) \le c$.
\end{proof}

\subsection{Shotgun lemma}
Imagine shooting at a target having $n$ squares, out of which $r$ are painted red, and the rest are white. Let us fire a shotgun with $d \le n-r$ bullets at the target, and assume that bullets hit a uniformly random $d$-subset of the $n$ squares. The probability that none of the bullets hits a red square is
$\binom{n-r}{d} / \binom{n}{d}$.
Imagine a reversed setting, where we fire $r \le n-d$ bullets, trying to avoid $d$ red squares. The probability of success in that case is 
$\binom{n-d}{r} / \binom{n}{r}$. The following lemma confirms that both probabilities are the same.

\begin{lemma}
\label{the:Shotgun}
For any integers $d,r,n \ge 0$ such that $d+r \le n$, the probability
$p = \binom{n-r}{d}/\binom{n}{d}$ can be written as
$p = \binom{n-d}{r}/\binom{n}{r}$
and is bounded by
$p \le (1-r/n)^d$ and $p \le (1-d/n)^r$.
\end{lemma}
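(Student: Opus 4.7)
\medskip

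The plan is to prove this in two short steps: first the factorial identity, then the two product bounds, using the two ways of writing $p$.

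For the identity, I would expand both expressions in terms of factorials. We have
\[
 \frac{\binom{n-r}{d}}{\binom{n}{d}}
 \weq \frac{(n-r)!\, (n-d)!}{n!\, (n-r-d)!}
\]
by cancelling the $d!$ factors, and the same calculation starting from $\binom{n-d}{r}/\binom{n}{r}$ (cancelling $r!$) yields the identical expression $\frac{(n-d)!\,(n-r)!}{n!\,(n-d-r)!}$. Note that both sides are well-defined under the hypothesis $d+r \le n$. This establishes the first claim.

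For the bounds, I would write $p$ as a telescoping product. Using the first form,
\[
 p \weq \frac{\binom{n-r}{d}}{\binom{n}{d}}
 \weq \prod_{i=0}^{d-1} \frac{n-r-i}{n-i}
 \weq \prod_{i=0}^{d-1} \left( 1 - \frac{r}{n-i} \right).
\]
Since $n-i \le n$ for each $i \ge 0$, we have $\frac{r}{n-i} \ge \frac{r}{n}$, so each factor is at most $1 - r/n$, giving $p \le (1-r/n)^d$. The symmetric bound $p \le (1-d/n)^r$ follows by applying the same argument to the alternative form $p = \binom{n-d}{r}/\binom{n}{r}$, which yields the product $\prod_{i=0}^{r-1}(1 - d/(n-i))$.

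There is no real obstacle here: the identity is a direct factorial computation, and the bounds follow from a one-line monotonicity observation on the product representation. The lemma is elementary and the proof should fit in a few lines.
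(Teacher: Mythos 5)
Your proof is correct and follows essentially the same approach as the paper: expand the binomial ratio into factorials to get the symmetric form, then write it as a telescoping product $\prod_{i=0}^{d-1}(1 - r/(n-i))$ and bound each factor by $1 - r/n$.
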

\begin{proof}
To verify that $\binom{n-r}{d}/\binom{n}{d} = \binom{n-d}{r} / \binom{n}{r}$, it suffices to write
\[
 \binom{n-r}{d} / \binom{n}{d}
 \weq \frac{(n-r)!}{d!(n-r-d)!} \frac{d!(n-d)!}{n!}
 \weq \frac{(n-r)!}{(n-r-d)!} \frac{(n-d)!}{n!},
\]
and note that the right side remains the same if $d$ and $r$ are swapped.
We also note that
$
 \binom{n-r}{d}/\binom{n}{d}
 = \frac{(n-r)_d}{(n)_d}
 = \prod_{c=0}^{d-1} \frac{n-r-c}{n-c}
 = \prod_{c=0}^{d-1} \left(1 - \frac{r}{n-c}\right),
$
from which we find that $p \le (1-r/n)^d$. The latter inequality follows by repeating the same argument with $d$ and $r$ swapped.
\end{proof}

\subsection{Other bounds}

\begin{lemma}
\label{the:BinomRatio}
For any integers $0 \le d_1 \le d_2 \le n$,
$\binom{n}{d_1} / \binom{n}{d_2} \le (\frac{d_2}{n-d_2+1})^k$ where $k=d_2-d_1$.
\end{lemma}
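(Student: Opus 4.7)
The plan is to prove the bound by direct algebraic manipulation, writing the ratio $\binom{n}{d_1}/\binom{n}{d_2}$ as a product of $k = d_2 - d_1$ fractions and bounding each factor separately. No probabilistic argument or appeal to earlier results is needed; the claim is an elementary combinatorial identity followed by termwise estimation.

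First, I would expand
\[
 \frac{\binom{n}{d_1}}{\binom{n}{d_2}}
 \weq \frac{n!}{d_1!(n-d_1)!} \cdot \frac{d_2!(n-d_2)!}{n!}
 \weq \frac{d_2!}{d_1!} \cdot \frac{(n-d_2)!}{(n-d_1)!}
 \weq \frac{d_2 \cdot (d_2-1) \cdots (d_1+1)}{(n-d_1)(n-d_1-1) \cdots (n-d_2+1)}.
\]
Next, I would observe that the numerator is a product of exactly $k$ consecutive integers, all of which lie in the interval $[d_1+1, d_2]$ and are therefore bounded above by $d_2$. Similarly, the denominator is a product of exactly $k$ consecutive integers lying in $[n-d_2+1, n-d_1]$, and each is bounded below by $n-d_2+1$. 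Both counts of $k$ factors should be verified by noting that $d_2 - (d_1+1) + 1 = k$ and $(n-d_1) - (n-d_2+1) + 1 = k$. The claimed bound $\binom{n}{d_1}/\binom{n}{d_2} \le (d_2/(n-d_2+1))^k$ then follows immediately.

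A minor edge case to check is the degenerate situation $d_1 = d_2$, corresponding to $k=0$, in which both sides equal $1$ and the bound holds trivially (the denominator $n-d_2+1$ is at least $1$ whenever $d_2 \le n$). There is no genuine obstacle here; the only thing to be careful about is the precise counting of the factors in the numerator and denominator, so that the exponent $k$ matches on both sides.
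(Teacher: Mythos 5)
Your proof is correct and takes essentially the same approach as the paper: expand the binomial ratio into $\frac{d_2!}{d_1!}\cdot\frac{(n-d_2)!}{(n-d_1)!}$, write each factorial ratio as a product of $k$ consecutive integers, and bound termwise by $d_2$ in the numerator and $n-d_2+1$ in the denominator. Your explicit factor counting and the $k=0$ edge-case check are small presentational additions, not a different argument.
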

\begin{proof}
Observe that
\begin{align*}
 \frac{\binom{n}{d_1}}{\binom{n}{d_2}}
 \weq \frac{n!}{d_1! (n-d_1)!} \frac{d_2! (n-d_2)!}{n!} 
 \weq \frac{d_2!}{d_1!} \frac{(n-d_2)!}{(n-d_1)!}.
\end{align*}
Observe that $d_1! (d_1+1)^k \le d_2! \le d_1! d_2^k$,
and similarly, $(n-d_2)! (n-d_2+1)^k \le (n-d_1)! \le (n-d_2)! (n-d_1)^k$. These bounds imply that
\[
 \frac{d_2!}{d_1!}
 \wle d_2^k
 \quad\text{and}\quad
 \frac{(n-d_2)!}{(n-d_1)!}
 \wle \frac{1}{(n-d_2+1)^k},
\]
and the claim follows.
\end{proof}

\begin{lemma}
\label{the:BinomBounds}
For any integers $0 \le k \le n$, $(\frac{n}{k})^k \le \binom{n}{k} \le \frac{n^k}{k!} \le (e \frac{n}{k})^k$.
\end{lemma}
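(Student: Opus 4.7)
The plan is to prove each of the three inequalities in the chain separately, with the common starting point being the factorization
\[
 \binom{n}{k} \weq \frac{n(n-1)\cdots(n-k+1)}{k!} \weq \prod_{i=0}^{k-1}\frac{n-i}{k-i}.
\]
The degenerate case $k=0$ reduces, under the convention $0^0=1$, to the identity $1=1=1=1$, so I would dispose of it at the outset and thereafter assume $k\ge 1$.

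For the middle inequality $\binom{n}{k}\le n^k/k!$, I would simply bound each factor in the numerator of $n(n-1)\cdots(n-k+1)$ by $n$, which is immediate.

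For the right inequality $n^k/k!\le (en/k)^k$, the cleanest route is the Stirling-type lower bound $k!\ge (k/e)^k$. This I would extract at minimal cost from the power series
\[
 e^k \weq \sum_{j=0}^\infty \frac{k^j}{j!} \wge \frac{k^k}{k!},
\]
which rearranges to $k!\ge (k/e)^k$; dividing $n^k$ by this estimate gives the claim.

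For the left inequality $(n/k)^k\le\binom{n}{k}$, I would use the product representation above and show the pointwise bound $\frac{n-i}{k-i}\ge \frac{n}{k}$ for each $0\le i\le k-1$. Clearing the positive denominators $k$ and $k-i$ turns this into $k(n-i)\ge n(k-i)$, i.e.\ $i(n-k)\ge 0$, which holds because $n\ge k$. Multiplying the $k$ factors then gives the desired bound. The lemma is entirely elementary and there is no genuine obstacle; if anything is worth extra care, it is only making sure the denominators in the pointwise argument are positive (ensured by $i\le k-1$) and handling the $k=0$ boundary case.
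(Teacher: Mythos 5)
Your proposal is correct and follows essentially the same route as the paper: the first inequality via the termwise bound $\frac{n-i}{k-i}\ge\frac{n}{k}$ on the factored form of $\binom{n}{k}$, the second by bounding the numerator factors by $n$, and the third via $e^k\ge k^k/k!$ from the exponential series. You are slightly more careful than the paper in spelling out the pointwise inequality and the $k=0$ boundary case, but the argument is the same.
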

\begin{proof}
We note that
$
 \binom{n}{k}
 = \frac{n}{k} \frac{n-1}{k-1} \cdots \frac{n-k+1}{1}
$
where all terms on the right are at least $\frac{n}{k}$. Hence the first bound follows. The second inequality follows from, we observe that $\binom{n}{k} = \frac{(n)_k}{k!} \le  \frac{n^k}{k!}$. 
The last inequality follow by noting that
$e^k = \sum_{j \ge 0} \frac{k^j}{j!} \ge \frac{k^k}{k!}$.
\end{proof}

\begin{lemma}
\label{the:LogTaylor}
$-t(1-t)^{-1} \le \log(1-t) \le -t$ for all $t \in (0,1)$.
Furthermore, $-t - \frac{t^2}{1-t} \le \log(1-t) \le - t - \frac12 t^2$ for all $t \in (0,1)$.
\end{lemma}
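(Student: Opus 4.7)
\medskip

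\noindent\textbf{Proof plan.} All four bounds reduce to standard one-variable calculus, but I would organise them so that the sharp bounds are read off from the Taylor series of $\log(1-t)$ rather than proved independently. The plan is to first note the convergent series representation
\[
 \log(1-t) \weq -\sum_{k=1}^\infty \frac{t^k}{k}, \qquad t \in (0,1),
\]
in which every term $t^k/k$ is strictly positive.

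For the upper bound $\log(1-t) \le -t$, I would simply drop the terms with $k \ge 2$ from the series. Equivalently, one can verify that $f(t) = \log(1-t)+t$ satisfies $f(0)=0$ and $f'(t) = -t/(1-t) \le 0$ on $(0,1)$. For the lower bound $\log(1-t) \ge -t/(1-t)$, the cleanest route is to write $-\log(1-t) = \log\frac{1}{1-t} = \log(1+s)$ with $s = t/(1-t) > 0$, and apply the already-established inequality $\log(1+s) \le s$ to obtain $-\log(1-t) \le t/(1-t)$.

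The sharp upper bound $\log(1-t) \le -t - \tfrac12 t^2$ is then read off from the series by discarding only the tail $-\sum_{k \ge 3} t^k/k \le 0$. For the sharp lower bound $\log(1-t) \ge -t - \frac{t^2}{1-t}$, I would estimate the tail of the same series by the geometric majorant $t^k/k \le t^k$ valid for $k \ge 1$:
\[
 - \log(1-t) - t
 \weq \sum_{k=2}^\infty \frac{t^k}{k}
 \wle \sum_{k=2}^\infty t^k
 \weq \frac{t^2}{1-t},
\]
which rearranges to the desired inequality.

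I do not expect any genuine obstacle here; the only small point of care is keeping the direction of each inequality straight when passing between $\log(1-t)$ and $\log(1+s)$, and recognising that the sharp bounds are truly of Taylor-remainder type, so that the pair $(-t,-t-\tfrac12 t^2)$ comes from truncating the series after one and two terms, while the matching lower bounds $(-t/(1-t),-t-t^2/(1-t))$ come from bounding the corresponding tails by a geometric series.
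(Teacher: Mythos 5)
Your argument is correct and establishes all four inequalities. It takes a genuinely different route from the paper: you work from the power series $\log(1-t)=-\sum_{k\ge 1}t^k/k$, obtaining the upper bounds $-t$ and $-t-\tfrac12 t^2$ by truncation and the lower bounds by majorising the tail with a geometric series (or, for the coarse one, by the substitution $s=t/(1-t)$ together with $\log(1+s)\le s$). The paper instead works from the integral representation $\log(1-t)=-\int_0^t(1-s)^{-1}\,ds$, bounding the integrand on $[0,t]$ to get $-t/(1-t)\le\log(1-t)\le -t$, and uses Taylor's theorem with an iterated-integral remainder for $\log(1-t)\le -t-\tfrac12 t^2$; the stated lower bound $-t-t^2/(1-t)$ is then obtained by algebraic rearrangement, since it is in fact identically equal to $-t/(1-t)$. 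Your series approach has the advantage of treating all four bounds in a uniform way (truncate or tail-bound the same expansion), while the paper's integral approach is self-contained and avoids invoking convergence of the power series; both are elementary and equally rigorous. One small observation worth adding in your write-up: the two lower bounds you derive are actually the same number, since $-t/(1-t)=-t-t^2/(1-t)$, so the second lower bound adds nothing beyond a convenient rewriting.
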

\begin{proof}
Define $f(t) = \log(1-t)$ for $0 < t < 1$. Then $f'(t) = -(1-t)^{-1}$
and
$f(0)=0$. We find that
$
 \log(1-t) = - \int_0^t (1-s)^{-1} ds,
$
from which it follows that
\[
 -t(1-t)^{-1} \wle \log(1-t) \wle -t.
\]
Then $\log(1-t) - (-t) \wge t - \frac{t}{1-t} = -\frac{t^2}{1-t}$, so that $\log(1-t) \ge -t - \frac{t^2}{1-t}$.
We also note that $f''(t) = -(1-t)^{-2}$ and $f'''(t) = -2(1-t)^{-3}$. The formula
$f(t) = f(0) + f'(0)t + \frac12 f''(0)t^2 + \int_0^t \int_0^s \int_0^r f'''(q) \, dq \, dr \, ds$ implies that
$\log(1-t) \le f(0) + f'(0)t \frac12 f''(0) t^2 = -t - \frac12 t^2$.
\end{proof}

\updated{
\subsection*{Acknowledgements}

We thank an anonymous referee for helpful comments that have improved the presentation.
}

\bibliographystyle{siamplain}
\bibliography{lslReferences}

\newcommand{\SortNoop}[1]{}\def\cprime{$'$}
\begin{thebibliography}{10}

\bibitem{Ahn_Lee_Suh_2019}
{\sc K.~Ahn, K.~Lee, and C.~Suh}, {\em Community recovery in hypergraphs}, IEEE
  Transactions on Information Theory, 65 (2019), pp.~6561--6579,
  \url{https://doi.org/10.1109/TIT.2019.2920637}.

\bibitem{Behrisch_Coja-Oghlan_Kang_2010}
{\sc M.~Behrisch, A.~Coja-Oghlan, and M.~Kang}, {\em The order of the giant
  component of random hypergraphs}, Random Structures \& Algorithms, 36 (2010),
  pp.~149--184, \url{https://doi.org/https://doi.org/10.1002/rsa.20282}.

\bibitem{Behrisch_Coja-Oghlan_Kang_2014}
{\sc M.~Behrisch, A.~Coja-Oghlan, and M.~Kang}, {\em The asymptotic number of
  connected d-uniform hypergraphs}, Combinatorics, Probability and Computing,
  23 (2014), pp.~367--385, \url{https://doi.org/10.1017/S0963548314000029}.

\bibitem{Bloznelis_Karjalainen_Leskela_2022}
{\sc M.~Bloznelis, J.~Karjalainen, and L.~Leskel\"a}, {\em Assortativity and
  bidegree distributions on {Bernoulli} random graph superpositions},
  Probability in the Engineering and Informational Sciences, 36 (2022),
  pp.~1188--1213, \url{https://doi.org/10.1017/S0269964821000310}.

\bibitem{Bloznelis_Karjalainen_Leskela_2023}
{\sc M.~Bloznelis, J.~Karjalainen, and L.~Leskel\"a}, {\em Normal and stable
  approximation to subgraph counts in superpositions of {Bernoulli} random
  graphs}, Journal of Applied Probability, 61 (2023), pp.~401--419,
  \url{https://doi.org/10.1017/jpr.2023.48}.

\bibitem{Bloznelis_Leskela_2023}
{\sc M.~Bloznelis and L.~Leskel{\"a}}, {\em Clustering and percolation on
  superpositions of {Bernoulli} random graphs}, Random Structures \&
  Algorithms, 63 (2023), pp.~283--342, \url{https://doi.org/10.1002/rsa.21140}.

\bibitem{Chien_Lin_Wang_2019}
{\sc I.~E. Chien, C.-Y. Lin, and I.-H. Wang}, {\em On the minimax
  misclassification ratio of hypergraph community detection}, IEEE Transactions
  on Information Theory, 65 (2019), pp.~8095--8118,
  \url{https://doi.org/10.1109/TIT.2019.2928301}.

\bibitem{Chodrow_2020}
{\sc P.~S. Chodrow}, {\em Configuration models of random hypergraphs}, Journal
  of Complex Networks, 8 (2020), pp.~1--26,
  \url{https://doi.org/10.1093/comnet/cnaa018}.

\bibitem{Chodrow_Veldt_Benson_2021}
{\sc P.~S. Chodrow, N.~Veldt, and A.~R. Benson}, {\em Generative hypergraph
  clustering: From blockmodels to modularity}, Science Advances, 7 (2021),
  p.~eabh1303, \url{https://doi.org/10.1126/sciadv.abh1303}.

\bibitem{Coja-Oghlan_Moore_Sanwalani_2007}
{\sc A.~Coja{-}Oghlan, C.~Moore, and V.~Sanwalani}, {\em Counting connected
  graphs and hypergraphs via the probabilistic method}, Random Structures \&
  Algorithms, 31 (2007), pp.~288--329, \url{https://doi.org/10.1002/rsa.20160}.

\bibitem{Deijfen_Rosengren_Trapman_2018}
{\sc M.~Deijfen, S.~Rosengren, and P.~Trapman}, {\em The tail does not
  determine the size of the giant}, Journal of Statistical Physics, 173 (2018),
  pp.~736--745, \url{https://doi.org/10.1007/s10955-018-2071-4}.

\bibitem{Erdos_Renyi_1959}
{\sc P.~Erd{\H{o}}s and A.~R{\'e}nyi}, {\em On random graphs. {I}}, Publ. Math.
  Debrecen, 6 (1959), pp.~290--297.

\bibitem{Godehardt_Jaworski_2001}
{\sc E.~Godehardt and J.~Jaworski}, {\em Two models of random intersection
  graphs and their applications}, Electronic Notes in Discrete Mathematics, 10
  (2001), pp.~129--132.

\bibitem{Godehardt_Jaworski_Rybarczyk_2007}
{\sc E.~Godehardt, J.~Jaworski, and K.~Rybarczyk}, {\em Random intersection
  graphs and classification}, in Advances in Data Analysis, R.~Decker and H.~J.
  Lenz, eds., Berlin, Heidelberg, 2007, Springer Berlin Heidelberg, pp.~67--74.

\bibitem{Grohn_Karjalainen_Leskela_2024}
{\sc T.~Gr{\"o}hn, J.~Karjalainen, and L.~Leskel{\"a}}, {\em Clique and cycle
  frequencies in a sparse random graph model with overlapping communities},
  Stochastic Models,  (2024),
  \url{https://doi.org/10.1080/15326349.2024.2313987}.

\bibitem{Janson_2009}
{\sc S.~Janson}, {\em The probability that a random multigraph is simple},
  Combinatorics, Probability and Computing, 18 (2009), pp.~205--225,
  \url{https://doi.org/10.1017/S0963548308009644}.

\bibitem{Kaminski_Pralat_Theberge_2021}
{\sc B.~Kami{\'{n}}ski, P.~Pra{\l}at, and F.~Th{\'e}berge}, {\em Community
  detection algorithm using hypergraph modularity}, in Complex Networks {\&}
  Their Applications, 2021, pp.~152--163.

\bibitem{Karjalainen_VanLeeuwaarden_Leskela_2018}
{\sc J.~Karjalainen, J.~S.~H. van Leeuwaarden, and L.~Leskel\"a}, {\em
  Parameter estimators of sparse random intersection graphs with thinned
  communities}, in International Workshop on Algorithms and Models for the Web
  Graph (WAW), A.~Bonato, P.~Pra{\l}at, and A.~Raigorodskii, eds., 2018,
  \url{https://doi.org/10.1007/978-3-319-92871-5_4}.

\bibitem{Karonski_Luczak_2002}
{\sc M.~Karo{\'n}ski and T.~{\L}uczak}, {\em The phase transition in a random
  hypergraph}, Journal of Computational and Applied Mathematics, 142 (2002),
  pp.~125--135, \url{https://doi.org/10.1016/S0377-0427(01)00464-2}.

\bibitem{Leskela_Ngo_2017}
{\sc L.~Leskel\"a and H.~Ngo}, {\em The impact of degree variability on
  connectivity properties of large networks}, Internet Mathematics, 1 (2017),
  pp.~1--24, \url{https://doi.org/10.24166/im.07.2017}.

\bibitem{Mitzenmacher_Upfal_2005}
{\sc M.~Mitzenmacher and E.~Upfal}, {\em Probability and Computing}, Cambridge
  University Press, 2005.

\bibitem{Petti_Vempala_2022}
{\sc S.~Petti and S.~Vempala}, {\em Approximating sparse graphs: {T}he random
  overlapping communities model}, Random Structures \& Algorithms, 61 (2022),
  pp.~844--908.

\bibitem{Poole_2015}
{\sc D.~Poole}, {\em On the strength of connectedness of a random hypergraph},
  Electronic Journal of Combinatorics,  (2015).

\bibitem{Schmidt-Pruzan_Shamir_1985}
{\sc J.~Schmidt-Pruzan and E.~Shamir}, {\em Component structure in the
  evolution of random hypergraphs}, Combinatorica, 5 (1985), pp.~81--94,
  \url{https://doi.org/10.1007/BF02579445}.

\end{thebibliography}

\end{document}